\newtheorem{proposition}{Proposition}[section]
\newtheorem{lemma}[proposition]{Lemma}
\newtheorem{theorem}[proposition]{Theorem}
\newtheorem{corollary}[proposition]{Corollary}
\newtheorem{THM}{Theorem}
\theoremstyle{definition}
\newtheorem{definition}[proposition]{Definition}
 \newtheorem{example}[proposition]{Example}
 \newtheorem{remark}[proposition]{Remark}
\newcommand{\C}{\mathbb C}
\begin{document}
\title{Topology of leaves of generic logarithmic foliations on $\mathbb{CP}^2$.}
\author{Diego Rodr\'{i}guez Guzm\'{a}n\footnote{The research for this paper is based on the author's thesis presented for the degree of Doctor in Mathematics at IMPA, July 2016. He was partially supported by CONACYT-FORDECYT 265667 and CONACYT CB2016/286447 to conclude this paper.}}
\date{August 2019}

\maketitle

\begin{abstract}
We describe the topological types of leaves of generic logarithmic foliations on the complex projective plane. We prove that all leaves, except for a finite many are biholomorphic to $\mathbb{C}$ or homeomorphic to the surface known as  Loch Ness monster.
\end{abstract}

\begin{quote}
\small\textbf{2010 Mathematics Subject Classification:} 37F75, 55P15, 57M10, 57R30.

\end{quote}

\begin{quote}
\small\textbf{Key words:} Singular Foliations, Logarithmic Foliations, Topology of Leaves.
\end{quote}

\section{Introduction}
A leaf $\mathcal{L}$ of a holomorphic foliation $\mathcal{F}$ by complex curves on a complex manifold $M$ is a Riemann surface. In general, $\mathcal{L}$ is non-compact, and its closure $\overline{\mathcal{L}}$ in $M$ does not correspond to a compact Riemann surface embedding in $M$. Given condition on $M$ and $\mathcal{F}$, an interesting question is: 
what is the topological type of the leaves of $\mathcal{F}$?

For the case of orientable regular $\mathcal{C}^r$ differentiable foliation $\mathcal{F}$ of real dimension 2 with $r\geq3$ on compact manifolds $M$, a theorem of E. Ghys \cite{ghys1995topologie} proves that if  $\mathcal{F}$ has no compact leaf, then almost every leaf  has one of the following six topological types: a plane, a cylinder, a sphere without a Cantor set, a plane with infinitely many handles attached, a cylinder with infinitely many handles attached to both ends and a sphere minus a Cantor set with infinitely many handles attached to every end. In such case $\mathcal{F}$ has a compact leaf $C$ with finite holonomy group $\mathrm{Hol}(C,\mathcal{F})$, the local stability theorem of Reeb implies that there is a open neighborhood $U\subset M$ of $C$ saturated by $\mathcal{F}$ and each leaf in $U$ is a finite cover of $C$.

In the holomorphic case, the foliations have singularities; this does not allow to extend the above results. However, Anosov conjectured that for a generic holomorphic foliation $\mathcal{F}$ on the complex projective plane $\mathbb{CP}^2$ all the leaves are disks, except for a countably set of leaves which are topological cylinders. This conjecture remains unsolved. In a recent work N. Sibony and E. F. Wold \cite{sibony2017topology} give conditions on holomorphic foliations $\mathcal{F}$ on complex compact surfaces $X$, which guarantees the existence of a non-trivial closed subset $Y$ of $X$ saturated by $\mathcal{F}$, such that every leaf in $Y$ is a disk.

Our purpose is to give the topological type of the leaves for generic logarithmic foliations on $\mathbb{CP}^2$. The logarithmic foliation on complex surfaces lets invariant compact Riemann surfaces. Thus, we study
when a foliation $\mathcal{F}$ by real surfaces let invariant a compact Riemann surface$\Sigma_g$ with only $n$ singular points $\{p_j\}_{j=1,\ldots,n}$ of $\mathcal{F}$, in particular, $\Sigma_{g,n}=\Sigma_g-\{p_j\}$ is a leaf of $\mathcal{F}$. We will prove the following statement.
\begin{THM}\label{THM:ReebType}
 Let $\mathcal{F}$ be a singular $\mathcal{C}^r$ differential foliation of real dimension 2 on a manifold $M$ and $o\in\Sigma_{g,n}$ a regular point of $\mathcal{F}$, with $r\geq1$. If $\mathrm{Hol}(\Sigma_{g,n},\mathcal{F})$ is an infinite group, then for each $N\in\mathbb{N}$ there is an embedding
 \[
    \varepsilon:B_{N,\delta}(\tilde{o})\rightarrow(\mathcal{L}_p,p),
 \]
of the closed subset $B_{N,\delta}(\tilde{o})$ of the regular cover $\Sigma_{g,n}^H$ of $\Sigma_{g,n}$ corresponding to the normal subgroup $H=\ker(\mathrm{Hol}(\Sigma_{g,n},\mathcal{F}))$, and $\mathcal{L}_p$ is a leaf  through a regular point $p\in \tau-{o}$, with $\tau$ the germ of transversal to $\mathcal{F}$ at $o$ to define $\mathrm{Hol}(\Sigma_{g,n},\mathcal{F})$.
\end{THM}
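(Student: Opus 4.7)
The plan is to define $\varepsilon$ as a foliated lift of paths, using that every loop class in $H$ has trivial holonomy by construction. First I would fix a finite foliated atlas of $M$ covering the (compact) image of $B_{N,\delta}(\tilde{o})$ under $\Sigma_{g,n}^H \to \Sigma_{g,n} \subset M$. By the standard holonomy construction, each short path $\gamma$ starting at $o$ in $\Sigma_{g,n}$ admits, for any $p \in \tau$ close enough to $o$, a foliated lift $\gamma_p \subset \mathcal{L}_p$ starting at $p$ and shadowing $\gamma$ plaque by plaque. Longer paths are handled by subdivision along a finite chain of charts determined by the compact projection.

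Next I would globalize the lift using the cover $\Sigma_{g,n}^H \to \Sigma_{g,n}$. For $\tilde{x} \in B_{N,\delta}(\tilde{o})$, pick any path $\tilde{\gamma}$ from $\tilde{o}$ to $\tilde{x}$ in $\Sigma_{g,n}^H$ and declare $\varepsilon(\tilde{x})$ to be the endpoint of the foliated lift, starting at $p$, of the projection $\gamma$ of $\tilde{\gamma}$. Independence of the choice of $\tilde{\gamma}$ reduces by concatenation to the case of loops representing $\pi_1(\Sigma_{g,n}^H) = H$; but $H = \ker(\mathrm{Hol}(\Sigma_{g,n},\mathcal{F}))$, so the foliated lift of such a loop is itself a loop at $p$ and $\varepsilon(\tilde{x})$ is unambiguous. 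Continuity, and in fact $\mathcal{C}^r$-smoothness inside each plaque, is immediate from the local product structure of $\mathcal{F}$.

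The delicate point is injectivity, which I would obtain by a generic choice of $p$. Were $\tilde{x}_1 \neq \tilde{x}_2$ in $B_{N,\delta}(\tilde{o})$ to satisfy $\varepsilon(\tilde{x}_1) = \varepsilon(\tilde{x}_2)$, the local triviality of $\mathcal{F}$ at the common image forces $\tilde{x}_1$ and $\tilde{x}_2$ to project to the same point of $\Sigma_{g,n}$, so they are related by a non-trivial deck transformation $h \in \mathrm{Hol}(\Sigma_{g,n},\mathcal{F}) = \pi_1(\Sigma_{g,n})/H$; moreover $h$, viewed as a $\mathcal{C}^r$ germ on $\tau$ at $o$, must fix $p$. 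Since the deck group acts properly discontinuously and $B_{N,\delta}(\tilde{o})$ is compact, only finitely many $h_1,\ldots,h_k \in \mathrm{Hol}$ satisfy $h_i\cdot B_{N,\delta}(\tilde{o}) \cap B_{N,\delta}(\tilde{o}) \neq \emptyset$, and each non-trivial $h_i$ has a proper fixed-point set near $o$ in $\tau$. Choosing a regular $p$ sufficiently close to $o$ in the complement of this finite union of nowhere-dense sets makes $\varepsilon$ injective, and compactness of $B_{N,\delta}(\tilde{o})$ then promotes the injective continuous map to a topological embedding into $(\mathcal{L}_p,p)$.

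The main obstacle is exactly this injectivity: the leaf $\mathcal{L}_p$ may wrap back and intersect itself as $\tilde{\gamma}$ winds through the cover, and all such returns must be excluded simultaneously. The crucial ingredient is the proper discontinuity of the $\mathrm{Hol}$-action, which reduces what could a priori be an infinite family of avoidance conditions on $p$ to a finite one depending on $N$. The hypothesis that $\mathrm{Hol}(\Sigma_{g,n},\mathcal{F})$ is infinite is what gives the statement its content, ensuring that the compact pieces $B_{N,\delta}(\tilde{o})$ can be chosen arbitrarily large inside the non-compact cover $\Sigma_{g,n}^H$ and thus produce arbitrarily large embedded pieces inside nearby leaves.
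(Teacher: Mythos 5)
Your proposal is correct and follows essentially the same route as the paper: holonomy transport of paths starting at a point $p\in\tau$ near $o$, well-definedness of the resulting map coming from $H=\ker(\mathrm{Hol}(\Sigma_{g,n},\mathcal{F}))$, and injectivity from choosing $p$ with trivial isotropy for the finitely many holonomy elements relevant to a given $N$ (the paper merely routes the combinatorics through the Cayley-graph ball $B_N(\tilde{o})$ before thickening by lifted fundamental domains, which is a presentational rather than mathematical difference). The only step where your justification is thinner than you acknowledge is the claim that the fixed-point set of a non-trivial $\mathcal{C}^r$ holonomy germ is nowhere dense --- a non-identity germ only guarantees its fixed set contains no neighborhood of $o$, and finitely many such sets could a priori still cover a punctured neighborhood --- which is precisely the point the paper delegates to the cited proposition of Godbillon on the existence of points with trivial isotropy arbitrarily close to $o$.
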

The subset $B_{N,\delta}(\tilde{o})$ is a finite cover of $\Sigma_{g,n}-\cup_j B_{\delta}(p_j)$ and carry on with some topology of $\Sigma_{g,n}^H$, with $B_{\delta}(p_j)$ open neighborhoods of $p_j$ in $\Sigma_g$ (see Definition \ref{def:graphball}). Moreover, if the regular cover has infinitely many handles attached then, the above result implies that the foliation has leaves with infinitely many handles attached. In this sense, another Known result given by Goncharuk and Kudryashov \cite{goncharuk2014genera}, which proved that for generic holomorphic foliations $\mathcal{F}$ on $\mathbb{CP}^2$ with a projective line invariant by $\mathcal{F}$ has leaves with $n$ handles attached, where $n$ is a number depending on the degree of the polynomial vector fields defining the foliation.
In general, the holonomy group of an invariant compact Riemann surface by a logarithmic foliation on a complex surface is infinite, and the corresponding regular cover has infinitely many handles.  This claim is proved in Section 4 and let us prove our main result.
\begin{THM}\label{THM:LNMP2}
Let $\mathcal{F}$ be a logarithmic foliations on $\mathbb{CP}^2$ defined by a closed logarithmic 1-form $\omega$ with polar divisor $D$. Suppose that $D$ is normal crossing and the ratios $\lambda_j/\lambda_i$ of the residues of $\omega$ are not negative real numbers. Then all leaves, except for finite set of leaves, are homeomorphic to one of the following real surfaces:
\begin{itemize}
\item[a)]The plane, in this case is biholomorphic to $\mathbb{C}$.
\item[b)]The Loch Ness Monster,i.e., the real plane with infinitely many handles attached. 
\end{itemize}
\end{THM}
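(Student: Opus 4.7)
The plan is to apply Theorem~\ref{THM:ReebType} to each irreducible component of the polar divisor. Write $\omega=\sum_{j=1}^{k}\lambda_j\frac{df_j}{f_j}$ with $D_j=\{f_j=0\}$; the normal crossing hypothesis forces the singular set of $\mathcal{F}$ along $D$ to consist of the finite set $S$ of nodes $p\in D_i\cap D_j$, so each $\Sigma^{(j)}:=D_j\setminus S$ is an $\mathcal{F}$-leaf obtained from the smooth curve $D_j$ by removing finitely many points. At a node $p\in D_i\cap D_j$ the local model $\lambda_i\frac{df_i}{f_i}+\lambda_j\frac{df_j}{f_j}=0$ gives a holonomy generator on a transversal to $D_j$ equal to $z\mapsto e^{2\pi i\lambda_i/\lambda_j}z$. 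The hypothesis $\lambda_i/\lambda_j\notin\mathbb{R}_{\le 0}$, combined with the $\mathbb{Q}$-independence of the ratios typical for generic residues, guarantees that the local generators are not roots of unity and that the full holonomy group $\mathrm{Hol}(\Sigma^{(j)},\mathcal{F})$ is infinite.

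Invoking the result announced in Section~4, that under these assumptions the regular cover $\Sigma^{(j)}_H$ of $\Sigma^{(j)}$ corresponding to $H=\ker(\mathrm{Hol}(\Sigma^{(j)},\mathcal{F}))$ has infinitely many handles, Theorem~\ref{THM:ReebType} produces for each regular $o\in\Sigma^{(j)}$, each transversal $\tau$ at $o$, and every $N\in\mathbb{N}$ an embedding of $B_{N,\delta}(\tilde o)$ into the leaf $\mathcal{L}_p$ through any $p\in\tau\setminus\{o\}$. Letting $N\to\infty$ forces $\mathcal{L}_p$ to contain infinitely many handles. By the Kerekj\'{a}rt\'{o}--Richards classification of non-compact orientable surfaces, $\mathcal{L}_p$ is the Loch Ness monster exactly when its space of ends reduces to a single point; I would establish this from the fact that such a leaf accumulates in a connected fashion on the entirety of $D$, a standard consequence of the density/minimality of generic logarithmic dynamics.

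For case~(a), I would exploit the closedness of $\omega$: near each node the leaves are level sets of the multivalued first integral $F=\sum_j\lambda_j\log f_j$, and each such leaf is locally parameterized by a single $f_j$-coordinate ranging in $\mathbb{C}^*$. Passing to the universal cover of $\mathbb{C}^*$ via the exponential yields a holomorphic immersion $\mathbb{C}\to\mathcal{L}_p$; when the global monodromy of this lift around the fundamental group of the leaf is trivial, $\mathcal{L}_p$ is biholomorphic to $\mathbb{C}$, otherwise the previous paragraph places $\mathcal{L}_p$ in case~(b). The finite exception set consists of the algebraic leaves $\Sigma^{(j)}$ themselves together with any algebraic separatrices at the singularities of $\mathcal{F}$. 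The main obstacle I anticipate is the end-counting argument for case~(b), together with the monodromy analysis separating the two cases: one must show that no leaf ends up with nontrivial but finite genus, so that the dichotomy is truly binary up to the finite exceptional set.
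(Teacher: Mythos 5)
Your skeleton (infinite holonomy along the components of $D$, embed arbitrarily large pieces of the regular cover via Theorem~\ref{THM:ReebType}, then classify by ends and genus) matches the paper's, but several of the steps you defer or assert are exactly where the paper has to work, and as written they are genuine gaps. First, you smuggle in ``$\mathbb{Q}$-independence of the ratios typical for generic residues'', which is not a hypothesis of the theorem. The paper instead \emph{derives} what it needs from the relation $\sum_j d_j\lambda_j=0$: dividing by $\lambda_l$ and using $\lambda_j/\lambda_l\notin\mathbb{R}_{<0}$ forces at least two ratios to be non-real, so the linear holonomy contains a genuine contraction. This also repairs a second gap in your argument: Theorem~\ref{THM:ReebType} gives, for each $N$, an embedding of $B_{N,\delta}(\tilde o)$ into a leaf through \emph{some} point $p$ depending on $N$; ``letting $N\to\infty$'' does not by itself put infinite genus on a single leaf. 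The paper's Lemma~\ref{lemma:genuslog} uses the contracting element $h_\gamma$ (so that the orbit $\{h_\gamma^n(p)\}$ stays in $\tau-o$ with trivial isotropy) to stack these embeddings inside one fixed leaf, and Proposition~\ref{Propt:AbelianCovers} to verify that the cover actually has handles.

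Second, the one-end claim is the heart of the matter and cannot be dismissed as ``a standard consequence of density/minimality''. The paper proves it by combining Lemma~\ref{logarithmic ends} (every end of a non-algebraic leaf either is a local separatrix of a singularity off $D$ or has closure meeting $D$; this uses the rational map $F_0/F_\infty$ and Jouanolou's theorem) with Paul's connectedness theorem, which supplies a neighborhood $V$ of $D$ whose intersections with the level sets of the multivalued first integral are connected; two ends would then both meet the connected set $V\cap\mathcal{L}$, a contradiction. Finally, your treatment of case~(a) does not identify \emph{when} it occurs. The paper's dichotomy is explicit: if $r>3$ or some $d_j>1$ one gets the Loch Ness monster, while the plane occurs exactly when $D$ is three lines in general position, in which case $\mathbb{CP}^2-D\cong(\mathbb{C}^*)^2$, the pullback of $\omega$ to the universal cover $\mathbb{C}^2$ is the linear form $2\pi i(\lambda_1dx+\lambda_2dy)$, and the non-real ratio $\lambda_1/\lambda_2$ makes each affine line map biholomorphically onto its image. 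Your ``global monodromy of the lift is trivial'' criterion is not a proof and leaves open precisely the possibility you flag yourself, namely leaves of finite nonzero genus.
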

The proof of this theorem relies on the description of the following topological invariants of an open orientable real surface $\Sigma$ (see \cite{richards1963classification} for more details):
\begin{itemize}
\item[a)]The space of ends $\mathcal{E}(\Sigma)$, which is compact and totally disconnected.
\item[b)]The genus, which is the number of handles attached on $\Sigma$.
\end{itemize}
We recall these concepts in Section 2 and use them to show that there is only eleven infinite regular covers of compact Riemann surfaces minus a finite set of points. Also, we show that each of these surfaces is realizable as a generic leaf of a Riccati foliation.
Section 3 provides a detailed exposition of the proof of Theorem 1, and we give all the concepts related with it. In the Section 4, we will be interested with the description of the end space and genus of the leaves of generic logarithmic foliations to give the proof of Theorem 2.

\textbf{Acknowledgements.} The author thanks Jorge Pereira, for fruitful discussions and corrections. Also, he thanks the Universidad Aut\'{o}noma de Aguascalientes and the CIMAT for their hospitality during the corresponding stays.

\section{Regular covers of real surfaces}
This section presents a description of regular covers of orientable bordered real surfaces, via Cayley graphs. In particular, we show that any of these regular covers with infinite deck transformation group is of one of the following eleven topological types: 
\begin{itemize}\label{list:11}
\item[1)] the plane,
\item[2)] the Loch Ness monster, i.e., the real plane with infinitely many handles attached,
\item[3)] the cylinder,
\item[4)] the Jacob's ladder,  i.e., the cylinder with infinitely many handles attached to both directions,
\item[5)] the Cantor tree, i.e., the sphere without a Cantor set,
\item[6)] the blooming Cantor tree, i.e., the Cantor tree with infinitely many handles attached to each end,
\item[7)] the plane without an infinite discrete set,
\item[8)] the Loch Ness monster without an infinite discrete set,
\item[9)] the Jacob's ladder without an infinite discrete set
\item[10)] the Cantor tree without an infinite discrete set,
\item[11)] the blooming Cantor tree without an infinite discrete set.
\end{itemize}
Although this result seems to be well-known, we could only find  in the literature a proof for normal covers of compact Riemann surfaces \cite{goldman1967open}. We will see that each of these surfaces is realizable as the generic leaf of regular or singular foliations of real dimension two on compact manifolds. The properties and conclusions in this section support further development in this text.

Let $\Sigma_{g,n}$ denote the orientable real surface $\Sigma_g$ of genus $g$ whit a set $\{p_j\}\subset\Sigma_g$ of $n$ points taken out. We recall that every compact orientable surface $\Sigma_g$, with $g>0$, is constructed from a polygon $P_{4g}$ with $4g$-sides by identifying pairs of edges. A couple of edges identified will be labeled by the letter $a$ if the direction for attaching correspond to the chosen orientation of $\partial P_{4g}$ or $a^{-1}$ if it is counter the orientation.

We will think the surface $\Sigma_{g,n}$, with $g>0$ and $n\geq 0$, as a punctured polygon $P_{4g}^n\subset P_{4g}$ of $4g$ edges without $n$ points $\{p_1,\ldots,p_n\}$ of its interior and with boundary $a_{1}b_{1}a_{1}^{-1}b_{1}^{-1}\cdots a_{g}b_{g}a_{g}^{-1}b_{g}^{-1}$. Let $o\in \Sigma_{g,n}$ denote the vertices attached of $P_{4g}$. We can choose the generators of the fundamental group $\pi_1(\Sigma_{g,n},o)$ to be the homotopy class of \[\mathfrak{G}_{g,n}:=\{a_1,b_1,\ldots,a_g,b_g,c_1,\ldots,c_{n-1}\},\] where $a_j,b_k$ correspond to the edges of $P_{4g}^n$ and the curves $c_j$ ,with $j=1,\ldots,n$, are the boundaries $\partial B(p_j,\delta)$ for $0<\delta<<1$ such that the intersections $c_j\cap c_i$ and $c_j\cap\partial P_{4g}$ are empty for all $j$ and $j\neq i$. These generators of $\pi_1(\Sigma_{g,n},o)$  will be called \emph{canonical generators}. When $g=0$ we only consider the generators $c_j$. 

For every normal subgroup $H$ of $\pi_1(\Sigma_{g,n},o)$, there is a regular cover \[\rho_H:\Sigma_{g,n}^H\rightarrow\Sigma_{g,n}\] of $\Sigma_{g,n}$ such that the image $\rho_{H*}(\pi_1(\Sigma_{g,n}^H))$ is $H$ and its group of deck transformations $A_{g,n}^H$ is isomorphic to $\pi_1(\Sigma_{g,n},o)/H$ (see \cite[p.71]{hatcher2002algebraic} for more details). In particular, we have a epimorphism $\varrho_H:\pi_1(\Sigma_{g,n},o)\rightarrow A_{g,n}^H$ whose kernel is $H$.

Remind that the Cayley graph $\mathrm{Cayley}(A_{g,n}^H,\mathfrak{G}_{g,n})$, of $A_{g,n}^H$ respect $\mathfrak{G}_{g,n}$ is the graph whose set of vertices $V$ is $A_{g,n}^H$ and its set of edges is \[\{(a,\gamma\cdot a)\in V\times V|\gamma\in \mathfrak{G}_{g,n}\}.\]
In case some $\gamma\in\mathfrak{G}_{g,n}$ is in $H$ the edge $(a,\gamma\cdot a)$ is a loop.

Fix a point $\tilde{o}$ in $\rho_{H}^{-1}(o)$ and $\mathit{id}$ denotes the identity element of $A_{g,n}^H$. We give an embedding of $\mathrm{Cayley}(A_{g,n}^H,\mathfrak{G}_{g,n})$ in $\Sigma_{g,n}^H$ as follows: 
\[\begin{array}{lll}
\varepsilon:\mathrm{Cayley}(A_{g,n}^H,\mathfrak{G}_{g,n})&\rightarrow & \Sigma_{g,n}^H\\
\mathit{id} & \mapsto & \tilde{o} \\
a & \mapsto & a\tilde{o} \\
(a,\gamma a) & \mapsto & \tilde{\gamma}_{a\tilde{o}}
\end{array}\]
where $\tilde{\gamma}_{a\tilde{o}}$ is the lift of $\gamma\in\mathfrak{G}_{g,n}$ starting at $a\tilde{o}$ and end point $a_\gamma a\tilde{o}$, with $\varrho_{H}(\gamma)=a_\gamma$. From now on, $\mathrm{Cayley}(A_{g,n}^H)$ stands for the image of the embedding $\varepsilon$.

We consider that the distance $d(\tilde{o},\tilde{o}')$ between two vertices of $\mathrm{Cayley}(A_{g,n}^H)$ is the  length of the shortest $(\tilde{o},\tilde{o}')$-path, where the length of $(\tilde{o},\tilde{o}')$-path is the number of different edges which it contains. 

\begin{definition}\label{def:graphball} 
The ball $B_N(\tilde{o})$ of center $\tilde{o}$ and radius $N\in\mathbb{N}_{>0}$ in $\mathrm{Cayley}(A_{g,n}^H)$ is the collection of vertices of $\mathrm{Cayley}(A_{g,n}^H)$ at distance at most $N$ of $\tilde{o}$ and of paths of length less than $N$ starting at $\tilde{o}$. 
As $\Sigma_{g,n}=\Sigma_g-\{p_j\}_{j=1}^n$, we denote by $\Sigma_{g,n}(\delta)$ the compact surface, which is the complement of $\cup_{j=1}^n B(p_j,\delta)$ in $\Sigma_g$, and denote by $D_{g,n}(\delta)$ a fundamental domain of $\Sigma_{g,n}(\delta)$ such that $o\in D_{g,n}(\delta)$. Thus we define the $(N,\delta)$-ball $B_{N,\delta}(\tilde{o})\subset\Sigma_{g,n}^H$ as the closure of \[
\bigcup\limits_{\tilde{o}'\in B_N(\tilde{o})}\tilde{D}_{g,n}(\delta,\tilde{o}'),\]
where $\tilde{D}_{g,n}(\delta,\tilde{o}')$ is the lift of $\tilde{D}_{g,n}(\delta)$ at $\tilde{o}'$.
\end{definition}
Now, we recall that the topological classification of noncompact real surfaces states that the invariants which decide the topological class of an orientable connected real surface $S$ are its \textit{end space} $\mathcal{E}(S)$, genus finite or infinite, and the elements of $\mathcal{E}(S)$ where the genus accumulates in the latest case (see for instance\cite{richards1963classification}).
An end $e\in\mathcal{E}(S)$ is an equivalent class of a nested sequence $P_1\supset P_2\supset\cdots$ of open connected unbounded sets in $S$ satisfying the following conditions:
(i) the boundary of $P_k$ in $S$ is compact for all $k$;
(ii) the sequence $\{P_k\}$ does not have any common point.

The genus of $S$ can be understood as the limit of the genus of a compact exhaustion $\{K_i\}$ of $S$  such that the boundary of each $K_i$ is a union of closed simple curves.  We define the \emph{genus} $g(K_i)$ of $K_i$ as follows:

\[g(K_i)=\frac{1}{2}(2-\chi(K_i)-r_i),
\]
where $\chi(K_i)$ is the Euler characteristic  of $K_i$ and $r_i$ the number of boundary components of $K_i$ (see \cite{brahana21twodimensional}). Moreover, we will say that $e\in\mathcal{E}(S)$ accumulates genus if any sequence $\{P_k\}$ representing $e$, satisfies that the genus of $P_k$ is nonzero for all $k$. Otherwise, we say that $e$ is planar. Here $\mathcal{E}'(S)$ denotes the subset of ends which accumulates genus. The following results describe these invariants for $\Sigma_{g,n}^H$.

\begin{lemma}\label{lm:planarends}
Let $\Sigma_{g,n},\Sigma_{g,n}^H,H$ and $c_j\in\mathfrak{G}_{g,n}$ be as defined above, with $n>0$. If there exists an integer $m$ distinct from zero such that $c_j^m\in H$, then $\mathcal{E}(\Sigma_{g,n}^H)$ contains a subset of planar ends with discrete topology. Otherwise, if there is no $c_j$ with this property then $\mathcal{E}(\Sigma_{g,n}^H)$ is a single point.
\end{lemma}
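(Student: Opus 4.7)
My approach is to analyze the preimage of a small punctured disk around each puncture under $\rho_H$. For each $j$, let $A_j(\delta) := B(p_j,\delta)\setminus\{p_j\}$, so that $\pi_1(A_j(\delta))\cong\mathbb{Z}$ is generated by $c_j$. By standard covering-space theory, the connected components of $\rho_H^{-1}(A_j(\delta))$ are in bijection with the left cosets of $\langle \varrho_H(c_j)\rangle$ in $A_{g,n}^H$, and each such component is a cyclic cover of $A_j(\delta)$ of degree equal to the order of $\varrho_H(c_j)$.

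In the first case, fix $j$ with $c_j^m\in H$ for some minimal $m=m_0>0$. Then $\varrho_H(c_j)$ has finite order $m_0$, so each connected component of $\rho_H^{-1}(A_j(\delta))$ is a finite $m_0$-fold cyclic cover of a punctured disk, hence itself a punctured disk. Its puncture supplies a planar end of $\Sigma_{g,n}^H$; since distinct components are disjoint and each such puncture has a small punctured-disk neighborhood meeting no other, these ends are pairwise isolated in $\mathcal{E}(\Sigma_{g,n}^H)$ and therefore form the claimed discrete subset of planar ends.

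In the second case, every $\varrho_H(c_j)$ has infinite order, so each connected component of $\rho_H^{-1}(A_j(\delta))$ is the universal cover of a punctured disk, topologically a half-plane whose boundary is an infinite line (a lift of $\partial A_j(\delta)$) sitting inside $\Sigma_{g,n}^H$. To show $\mathcal{E}(\Sigma_{g,n}^H)$ is a single point, I would verify that for every compact $K\subset\Sigma_{g,n}^H$ the complement $\Sigma_{g,n}^H\setminus K$ has exactly one unbounded component. By enlarging $K$ to equal $B_{N,\delta}(\tilde{o})$ (Definition \ref{def:graphball}) for $N$ large together with a compact collar of each half-plane meeting it, the plan is to show that any two points outside $K$ can be joined by a path that exits along the infinite boundary line of its containing half-plane (which extends beyond $K$ since that line is bi-infinite), passes into a neighboring fundamental domain outside $K$, and repeats.

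The main obstacle is making the one-endedness claim in the second case rigorous. The Cayley graph $\mathrm{Cayley}(A_{g,n}^H,\mathfrak{G}_{g,n})$ embedded in $\Sigma_{g,n}^H$ may itself have many ends, but the attached half-planes serve to merge them; the key combinatorial fact is that every $c_j$-edge of the Cayley graph lies on a bi-infinite line contained in a single half-plane. I expect to produce, for each vertex of the Cayley graph outside the ball $B_N(\tilde{o})$, an explicit path in $\Sigma_{g,n}^H\setminus K$ to a fixed reference vertex outside $B_{N,\delta}(\tilde{o})$ by rerouting through these bi-infinite $c_j$-lines together with the $a_i,b_i$-edges, establishing connectivity at infinity.
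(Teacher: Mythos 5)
Your proposal follows essentially the same route as the paper's proof: in the first case both identify the components of $\rho_H^{-1}$ of a punctured neighborhood of $p_j$ as finite cyclic covers of a punctured disk, hence punctured disks contributing a discrete set of planar ends; in the second case both reduce one-endedness to showing that the complement of a large $(N,\delta)$-ball is connected by travelling along the unbounded lifts of the peripheral curves $c_j$. The connectivity step you flag as the main obstacle is treated at the same (brief) level of detail in the paper, which simply asserts that one can follow unbounded components of $\rho_H^{-1}(c_j)$ to join any two points outside $B_{N,\delta}(\tilde{o})$.
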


\begin{proof}
Let $\tilde{c_j}$ be the lift of $c_j$ in $\Sigma_{g,n}^H$ trough a point $\tilde{p}$ in $\rho_H^{-1}(p)$, whit $p\in c_j$. Assume that $c_j^m$ is in $H$, where $m=\mathrm{min}\{b\in\mathbb{Z}_{>0}|c_j^b\in H\}$. Therefore $\tilde{c_j}^m$ is a closed curve and a finite cover of $c_j$, which is the boundary of the pointed disk $\mathbb{D}_{j,\delta}=B(p_j,\delta)-\{p_j\}\subset\Sigma_{g,n}$. The connected component $\widetilde{\mathbb{D}_{j,\delta}}$ of $\rho_H^{-1}(\mathbb{D}_{j,\delta})$, whose boundary contains $\tilde{p}$, is a finite cover of $\mathbb{D}_{j,\delta}$. Choose a decreasing sequence $\{\delta_i\}$ of positive numbers, with $\delta_0=\delta$, which converges to zero. The lifts $\widetilde{\mathbb{D}_{j,\delta_i}}$ of $\mathbb{D}_{j,\delta_i}$ contained in $\widetilde{\mathbb{D}_{j,\delta}}$ define a planar end of $\Sigma_{g,n}^H$. 
Analogously, each connected component of $\rho_H^{-1}(\mathbb{D}_{j,d})$ defines a planar end. Moreover, these define a discrete set of planar ends of $\Sigma_{g,n}^H$.

In the case that no cycle $c_{j}^{m}$ is in $H$, with $j\in\{1,\ldots,n\}$ and $m\in\mathbb{Z}-\{0\}$, we will prove that for any compact 
$K\subset\Sigma_{g,n}^H$, exists a compact $K'\supset K$ in $\Sigma_{g,n}^H$ such that $\Sigma_{g,n}^H-K'$ is connected. The latest affirmation implies that the end space $\mathcal{E}(\Sigma_{g,n}^H)$ is a single point. 
Choose $\delta,\tilde{o}\in\rho_{H}^{-1}(o)$, and $N\in\mathbb{N}_{>0}$ such that $K\cap \tilde{D}_{g,n}(\delta,\tilde{o})$ is non-empty and $\rho_H(K)\subset\Sigma_{g,n}(\delta)$  and $K\subset B_{N,\delta}(\tilde{o})$. By assumption, each connected component of $\rho^{-1}(c_j)$ is an unbounded curve, with  $c_j=\partial B_{\delta/2}(p_j)$, and its intersection with $K'=B_{N,\delta}(\tilde{o})$ is empty. Thus, we can follow some unbounded components of $\rho^{-1}(c_j)$ to join any two points in $\Sigma_{g,n}^H-K'$, which is the desire conclusion.  
\end{proof}

\begin{lemma}\label{lm:ends'}
For any infinite regular cover $\Sigma_{g,n}^H$ the set $\mathcal{E}'(\Sigma_{g,n}^H)$ is one of the following sets: an empty set, a single point, two points or a cantor set.
\end{lemma}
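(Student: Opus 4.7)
The plan is to relate $\mathcal{E}'(\Sigma_{g,n}^H)$ to the space of ends of the Cayley graph $\mathrm{Cayley}(A_{g,n}^H,\mathfrak{G}_{g,n})$ and then invoke the Freudenthal--Hopf theorem on the number of ends of a finitely generated infinite group.

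First I would dispose of the case $g=0$. Here $\Sigma_{0,n}$ is a punctured sphere, hence planar, and any regular cover $\Sigma_{0,n}^H$ is planar as well. No end can accumulate genus, so $\mathcal{E}'(\Sigma_{0,n}^H)=\emptyset$, which is the first option in the statement.

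Next suppose $g\geq 1$. I would build a compact exhaustion $\{K_i\}$ of $\Sigma_{g,n}^H$ out of the $(N,\delta)$-balls of Definition \ref{def:graphball}, by choosing sequences $\delta_i\searrow 0$ and $N_i\nearrow\infty$ and setting $K_i=B_{N_i,\delta_i}(\tilde{o})$. Each lift of the fundamental domain $D_{g,n}(\delta_i)$ carries genus $g\geq 1$. The connected components of $\Sigma_{g,n}^H-K_i$ then split into two classes: lifts of the punctured disks $\mathbb{D}_{j,\delta_i}$, which are planar (cf.\ Lemma \ref{lm:planarends}) and contribute only puncture-type ends outside $\mathcal{E}'$; and components meeting infinitely many lifts of the fundamental domain, each contributing genus $g$, so that every end with a neighborhood basis of this second type lies in $\mathcal{E}'(\Sigma_{g,n}^H)$.

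The core of the argument is to exhibit a homeomorphism between $\mathcal{E}'(\Sigma_{g,n}^H)$ and the end space of the Cayley graph. An unbounded component of the second type is recorded by the subset of deck-group elements $a\in A_{g,n}^H$ whose translate $a\tilde{D}_{g,n}(\delta_i,\tilde{o})$ meets it; this subset is an unbounded connected subgraph of $\mathrm{Cayley}(A_{g,n}^H,\mathfrak{G}_{g,n})$ lying outside the combinatorial ball $B_{N_i}(\tilde{o})$, and conversely each such subgraph of the Cayley graph determines exactly one such component. Passing to nested intersections on both sides yields the required bijection, which is continuous because the two exhaustions are compatible.

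With this bijection, the Freudenthal--Hopf theorem closes the proof: the space of ends of a finitely generated infinite group is a single point, two points, or a Cantor set. The group $A_{g,n}^H\cong\pi_1(\Sigma_{g,n},o)/H$ inherits the finite generating set $\mathfrak{G}_{g,n}$ and is infinite by hypothesis, so these three possibilities exhaust the remaining cases. The main obstacle will be the technical bookkeeping in the bijection step: one must couple $N_i$ and $\delta_i$ so that every lift of a puncture neighborhood is eventually absorbed into some $K_j$ before it can masquerade as a Cayley-graph direction at infinity, and one must check that distinct orbits of punctures yield disjoint, separately absorbed families of planar ends rather than new elements of $\mathcal{E}'$.
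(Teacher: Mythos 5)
Your overall strategy (push $\mathcal{E}'$ onto the end space of the deck group and invoke the Freudenthal--Hopf classification) is the same as the paper's, but two of your key steps are false. First, the $g=0$ case: planarity is \emph{not} inherited by covering spaces. For instance, the cover of the pair of pants $\Sigma_{0,3}$ determined by the kernel of $F_2=\langle c_1,c_2\rangle\to\mathbb{Z}_3\times\mathbb{Z}_3$ has Euler characteristic $-9$ and $9$ punctures, hence genus $1$; more to the point, the infinite abelian covers $\Sigma_{0,3}^H$ of Example \ref{exp:logFol1} and Proposition \ref{Propt:AbelianCovers}(a) have \emph{infinite} genus, and the Loch Ness monster conclusion of Theorem \ref{THM:LNMP2} rests precisely on non-planar covers of punctured rational curves. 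So $\mathcal{E}'(\Sigma_{0,n}^H)$ can be nonempty, and your disposal of the $g=0$ case collapses.

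Second, for $g\geq1$ the asserted homeomorphism $\mathcal{E}'(\Sigma_{g,n}^H)\cong\mathcal{E}(\mathrm{Cayley}(A_{g,n}^H))$ is also false: the fundamental domain $D_{g,n}(\delta)$ is a punctured $4g$-gon, hence simply connected --- it does not ``carry genus $g$'' --- and genus in the cover is created only by the edge identifications prescribed by $H$. The $\mathbb{Z}$-cover of the torus $\Sigma_1$ is a cylinder, for which $\mathcal{E}'=\emptyset$ while the Cayley graph has two ends, so an end meeting infinitely many lifted fundamental domains need not accumulate genus. What is true, and what the paper proves, is weaker but sufficient: after plugging the planar puncture-ends (the components of $\rho_H^{-1}(\mathbb{D}_{j,\delta})$ for those $c_j$ of finite order in $A_{g,n}^H$), the resulting surface either has a one-point end space (Lemma \ref{lm:planarends}) or carries a properly discontinuous cocompact action of a subgroup $G\leq A_{g,n}^H$, whence its end space is $\mathcal{E}(G)$ and Freudenthal--Hopf applies; then $\mathcal{E}'$, being a closed $G$-invariant subset of that end space, is either empty or equal to all of it. To repair your argument you must replace the claimed bijection by this dichotomy and drop the planarity claim for $g=0$.
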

\begin{proof}
Let $J\subset\{1,\ldots,n\}$ such that $c_j$ satisfies that $c_j^{m_j}\in H$, where $m_j\in\mathbb{Z}_{>0}$ is defined as in the proof above. Plug the holes of $\Sigma_{g,n}^H$ surrounded by the connected components of $\rho_{H}^{-1}(c_j)$, with $j\in J$. This gives us a new surface $\Sigma_{g,n}^H(J)$ which contains $\Sigma_{g,n}^H$. Consider the subgroup $G$ of $A_{g,n}^H$ generated by $\varrho_{H}(\mathfrak{G}_{g,n}-\{c_j\}_{j\in J})$.
If $J=\{1,...,n\}$, then the group $G$ acts properly discontinuous on $\Sigma_{g,n}^H(J)$, and it is cocompact. Thus, the end space of $\Sigma_{g,n}^H(J)$ coincides with the end space of $G$ (see for instance \cite{scott_wall_wall_1979}), which by the \cite[Theorem8.2.11]{loh2011geometric} and \cite[Theorem 15.2]{goldman1967open} can only be empty, a single point, two points or a Cantor set. In the case $J$ is a proper subset of $\{1,..., n\}$, Lemma \ref{lm:planarends} implies that the end set of $\Sigma_{g,n}^H(J)$ is a point. 
 When $\mathcal{E}'(\Sigma_{g,n}^H(J))$ is non-empty, the group action of $G$ on $\Sigma_{g,n}^H(J)$ implies that $\mathcal{E}'(\Sigma_{g,n}^H(J))=\mathcal{E}(\Sigma_{g,n}^H(J))$. Thus, we have $\mathcal{E}'(\Sigma_{g,n}^H)=\mathcal{E}'(\Sigma_{g,n}^H(J))$, which completes the proof.
\end{proof}

The Riemann-Hurwitz formula gives the topological type of any finite regular cover $\Sigma_{g,n}^H$ of $\Sigma_{g,n}$. The Lemmas \ref{lm:planarends} and \ref{lm:ends'} well describe the invariants $\mathcal{E}(\Sigma_{g,n}^H)$ and $\mathcal{E}'(\Sigma_{g,n}^H)$, which imply the following result.

\begin{theorem}\label{THM:covers}
 Let $\Sigma_{g,n}^H$ be an infinite regular cover of $\Sigma_{g,n}$. Then $\Sigma_{g,n}^H$ is homeomorphic to one of the 11 topological types given at the beginning of this section. 
\end{theorem}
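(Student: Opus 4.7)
The plan is to apply Richards' topological classification \cite{richards1963classification} of noncompact orientable real surfaces without boundary: such a surface is determined, up to homeomorphism, by its genus, its space of ends $\mathcal{E}$, and the distinguished closed subset $\mathcal{E}'\subseteq\mathcal{E}$ of ends that accumulate genus. A first observation is that the genus of $\Sigma_{g,n}^H$ is either $0$ or $\infty$, and it is $\infty$ precisely when $\mathcal{E}'(\Sigma_{g,n}^H)\neq\emptyset$: since the infinite deck group $A_{g,n}^H$ acts freely and properly discontinuously by orientation preserving homeomorphisms, any single embedded handle generates infinitely many pairwise disjoint translates, and these accumulate exactly at the ends of $\mathcal{E}'(\Sigma_{g,n}^H)$. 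This reduces the theorem to enumerating the possible pairs $(\mathcal{E},\mathcal{E}')$ together with $\mathrm{genus}\in\{0,\infty\}$.

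I next combine the preceding two lemmas with the Freudenthal--Stallings trichotomy for the ends of a finitely generated group. When $n=0$, the cover $\Sigma_g^H$ is acted on cocompactly and properly discontinuously by $A_{g,0}^H$, so $\mathcal{E}(\Sigma_g^H)=\mathcal{E}'(\Sigma_g^H)$ coincides with the end space of the group itself, which is a single point, two points, or a Cantor set; coupling this with $\mathrm{genus}\in\{0,\infty\}$ produces exactly types $1)$--$6)$. When $n>0$, Lemma \ref{lm:planarends} gives a dichotomy: if no $c_j^m$ lies in $H$ then $\mathcal{E}(\Sigma_{g,n}^H)=\{*\}$ and only types $1)$ and $2)$ occur; otherwise a countable discrete subset of planar ends appears, coming from the lifted pointed disks $\mathbb{D}_{j,\delta}$.

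In the remaining case I appeal to the plugging construction from the proof of Lemma \ref{lm:ends'}. Filling the holes of $\Sigma_{g,n}^H$ surrounded by the lifts of the pluggable $c_j$'s yields a surface $\Sigma_{g,n}^H(J)$ whose end space is either a single point (if $J\subsetneq\{1,\ldots,n\}$, by applying Lemma \ref{lm:planarends} to this auxiliary surface) or one of the Freudenthal--Stallings alternatives (if $J=\{1,\ldots,n\}$, via the cocompact action of the subgroup $G$ used in that proof). The original end space $\mathcal{E}(\Sigma_{g,n}^H)$ is then recovered from $\mathcal{E}(\Sigma_{g,n}^H(J))$ by attaching, at each plugged hole, a countable discrete set of isolated planar ends accumulating on $\mathcal{E}(\Sigma_{g,n}^H(J))$. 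The discrete ends never accumulate genus, while the structural ends of $\Sigma_{g,n}^H(J)$ do exactly when $\mathcal{E}'\neq\emptyset$. Crossing these possibilities with $\mathrm{genus}\in\{0,\infty\}$ delivers exactly types $7)$--$11)$.

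The delicate step I anticipate is the last one: showing that the discrete planar ends accumulate on precisely the ends of $\Sigma_{g,n}^H(J)$, not on additional isolated points of $\mathcal{E}(\Sigma_{g,n}^H)$. I would deduce this from the $A_{g,n}^H$-invariance of the accumulation locus of the discrete planar set, combined with the fact that $A_{g,n}^H$ acts cocompactly on $\Sigma_{g,n}^H(J)$, which forces its induced action on $\mathcal{E}(\Sigma_{g,n}^H(J))$ to be minimal and therefore pins down the accumulation behavior of the planar ends uniquely.
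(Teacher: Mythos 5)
Your overall strategy is the same as the paper's: the printed proof of Theorem \ref{THM:covers} consists of two sentences that invoke Lemmas \ref{lm:planarends} and \ref{lm:ends'} together with Richards' classification, and your proposal is essentially an expanded version of that argument (with the welcome additions of the genus dichotomy via disjoint translates of a handle, and the explicit worry about where the discrete planar ends accumulate, which the paper does not address). One small imprecision: in the $n=0$ paragraph the identity $\mathcal{E}(\Sigma_g^H)=\mathcal{E}'(\Sigma_g^H)$ only holds when the genus is infinite (the universal cover of $\Sigma_2$ has one end and empty $\mathcal{E}'$); you clearly intend the conditional statement, but it should be phrased that way.

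There is, however, a genuine gap in your final enumeration, one that the paper shares. When $n>0$ and $J=\{1,\ldots,n\}$, the end space of the plugged surface $\Sigma_{g,n}^H(J)$ can be a single point, two points, or a Cantor set; crossing with $\mathrm{genus}\in\{0,\infty\}$ you claim to obtain ``exactly types $7)$--$11)$,'' but the combination (two ends, genus $0$) is silently dropped. That combination would produce the cylinder without an infinite discrete set, which is \emph{not} among the eleven listed types (its end space has exactly two non-isolated points and its genus is zero, so it is distinguished from types $7)$--$11)$ and from the cylinder itself). This case is not vacuous: take $\Sigma_{0,4}$ with $\pi_1=\langle c_1,\dots,c_4\mid c_1c_2c_3c_4=1\rangle$ and the surjection onto the infinite dihedral group $D_\infty=\langle a,b\mid a^2=b^2=1\rangle$ sending $c_1,c_2\mapsto a$ and $c_3,c_4\mapsto b$. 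All $c_j$ become involutions, so every puncture is pluggable; the plugged surface is the cover of the pillowcase orbifold $S^2(2,2,2,2)$ corresponding to the kernel, namely an open cylinder, and $\Sigma_{0,4}^H$ is that cylinder minus an infinite discrete orbit accumulating at both ends. So to complete the proof you must either rule this configuration out (you cannot) or note that the list of target surfaces needs a twelfth entry; as written, the concluding enumeration does not deliver the statement being proved. A related caution: the subgroup $G$ used in Lemma \ref{lm:ends'} is generated by $\varrho_H(\mathfrak{G}_{g,n}-\{c_j\}_{j\in J})$, which is trivial when $g=0$ and $J=\{1,\dots,n\}$, so in this corner case one must work with the full deck group acting with fixed points on $\Sigma_{g,n}^H(J)$ rather than with $G$.
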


\begin{remark}
An analogous theorem is valid for the nonorientable surfaces. Just it increases the list with the corresponding six open nonorientable surfaces.
\end{remark}

Now, we give some applications of these statements on foliation theory.
Consider a compact $\mathcal{C}^r$ differential manifold $F$, with $r\geq1$, and the group representation
 \[\Phi:\pi_1(\Sigma_{g,n})\rightarrow\mathrm{Diff}^r(F),\] 
where $\mathrm{Diff}^r(F)$ is the group of $C^r$ diffeomorphisms of $F$. Let $\Sigma^{id}_{g,n}$ the universal cover of $\Sigma_{g,n}$ and $M$ the quotient space of $\Sigma^{id}_{g,n}\times F/\sim$, where the equivalence relation is $(x,y)\sim(h_\gamma(x),\Phi(\gamma)(y))$ and $h_\gamma\in A^{id}_{g,n}$. $M$ is the suspension of $\Phi$ and it comes with a projection $P_\Phi:M\rightarrow\Sigma_{g,n}$. The foliation with leaves $\Sigma^{id}_{g,n}\times\{y\}$ on $\Sigma^{id}_{g,n}\times F$ gives a new foliation $\mathcal{F}_{\Phi}$ on $M$, which leaves are covering spaces of $\Sigma_{g,n}$ and they are transversal at each point of each fiber $P_{\Phi}^{-1}(o)=F$, with $o\in\Sigma_{g,n}$ (\cite[Chapter V]{camacho1979teoria}, \cite[Section 3.1,Vol.I]{candel2003foliations}). Thus, we have the following statement:

\begin{theorem}
Let $F,\Phi,P_{\Phi}:M\rightarrow\Sigma_{g,n}\text{and}\quad\mathcal{F}_{\Phi}$ be as above. Assume the image of each canonical generator $\gamma\in\mathfrak{G}_{g,n}$ under the representation $\Phi$ is a diffeomorphism of $F$ whose set of fixed points is finite. Then, any leaf of $\mathcal{F}_{\Phi}$ outside of a countable set of leaves is homeomorphic to one of the 11 topological types given at the Theorem \ref{THM:covers}.
\end{theorem}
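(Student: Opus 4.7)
The plan is to realize each leaf of $\mathcal{F}_{\Phi}$ as a covering space of $\Sigma_{g,n}$, determine which cover arises for a generic point of the fiber $F$, and then invoke Theorem \ref{THM:covers}. Standard suspension theory identifies the leaf $\mathcal{L}_y$ of $\mathcal{F}_{\Phi}$ through $(\tilde{o},y)\in \Sigma^{id}_{g,n}\times F$ with the covering space of $\Sigma_{g,n}$ associated to the isotropy subgroup
\[
H_y=\{\gamma\in\pi_1(\Sigma_{g,n},o):\Phi(\gamma)(y)=y\},
\]
with $P_{\Phi}|_{\mathcal{L}_y}$ as the covering map. Two points of $F$ in the same $\Phi(\pi_1(\Sigma_{g,n},o))$-orbit determine the same leaf, and along an orbit the groups $H_y$ are all conjugate.

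The core of the argument is to show that for $y$ outside a countable exceptional set, $H_y=\ker\Phi$; since this subgroup is normal, $\mathcal{L}_y$ is then homeomorphic to the regular cover $\Sigma_{g,n}^{H}$ with $H=\ker\Phi$. To this end, set
\[
B=\bigcup_{\gamma\in\pi_1(\Sigma_{g,n},o)\setminus\ker\Phi}\mathrm{Fix}(\Phi(\gamma))\subset F,
\]
so that $H_y=\ker\Phi$ precisely when $y\in F\setminus B$. Because $\pi_1(\Sigma_{g,n},o)$ is finitely generated, $\Phi(\pi_1(\Sigma_{g,n},o))$ is a countable subgroup of $\mathrm{Diff}^r(F)$ and $B$ is a countable union of fixed-point sets; by hypothesis each canonical generator contributes only finitely many fixed points. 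One then argues that $B$ meets only countably many $\Phi(\pi_1(\Sigma_{g,n},o))$-orbits, so the set of exceptional leaves is at most countable.

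Once $y\notin B$, the leaf $\mathcal{L}_y$ is identified with the regular cover $\Sigma_{g,n}^{H}$, $H=\ker\Phi$. In the generic case $[\pi_1(\Sigma_{g,n},o):\ker\Phi]=\infty$, Theorem \ref{THM:covers} immediately places $\mathcal{L}_y$ into one of the eleven topological types. The opposite case, in which $\Phi$ has finite image, yields only finitely many homeomorphism classes of (compact) finite covers, which can be absorbed into the countable exceptional family; this completes the classification outside a countable set of leaves.

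The step I expect to be the main obstacle is the control of $B$ away from the canonical generators. The hypothesis gives finiteness of $\mathrm{Fix}(\Phi(\gamma))$ only for $\gamma\in\mathfrak{G}_{g,n}$, whereas an arbitrary non-identity product $\Phi(\gamma_{i_1})\cdots\Phi(\gamma_{i_k})$ could in principle fix an uncountable subset of $F$. A propagation or genericity argument — for instance, showing that the closed set of points whose isotropy strictly contains $\ker\Phi$ is saturated by only countably many $\Phi(\pi_1(\Sigma_{g,n},o))$-orbits — must therefore be supplied to guarantee that the exceptional leaves genuinely form a countable family.
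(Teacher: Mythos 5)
Your proposal follows essentially the same route as the paper: identify each leaf with the cover of $\Sigma_{g,n}$ determined by the isotropy subgroup of the corresponding point of $F$ under the global holonomy $G=\Phi(\pi_1(\Sigma_{g,n},o))$, show that outside a countable subset of $F$ the isotropy is trivial so that the leaf is the regular cover attached to $\ker\Phi$, and then invoke Theorem \ref{THM:covers}. Concerning the obstacle you flag at the end: the paper does not supply the propagation argument either; its proof simply asserts that \emph{every} nontrivial element of $G$ has a finite fixed-point set, i.e.\ it tacitly uses a hypothesis stronger than the stated one on the canonical generators (this stronger form is automatic in the intended application to Riccati foliations, where a nontrivial element of $\mathrm{PSL}(2,\mathbb{C})$ fixes at most two points of $\mathbb{CP}^1$). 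So your concern is legitimate, but it is a gap shared with the paper rather than one your argument introduces; for countability of the exceptional set it would suffice to know that $\mathrm{Fix}(\Phi(\gamma))$ is countable for each $\gamma\notin\ker\Phi$. One further caveat about your closing remark: if $\Phi$ has finite image, the generic leaf is a finite cover of $\Sigma_{g,n}$ and there are then uncountably many such leaves, so they cannot be ``absorbed into the countable exceptional family''; Theorem \ref{THM:covers} applies only to infinite covers, and the paper's proof likewise silently assumes the infinite-index case.
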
    

\begin{proof}
Fix the fiber $F=P_{\Phi}^{-1}(o)$, with $o\in\Sigma_{g,n}$. Let $G=\Phi(\pi_1(\Sigma_{g,n}))$ be the image of the representation. The group G is known as the global holonomy of $\mathcal{F}_\Phi$. Take $p \in F$ and consider the leaf $\mathcal L_p$ of $\mathcal{F}_\Phi$ through $p$. It is a covering space of $\Sigma_{g,n}$, whose covering map is the restriction of $P_{\Phi}$ to $\mathcal{L}_p$. Note that the isotropy group $\mathrm{Iso}_G(p) = \{ g \in G | g(p) = p \}$ is a subgroup of $P_{\Phi\ast}(\pi_1(\mathcal{L}_p))$.

Since $G$ is countable and any nontrivial element in $G$ has a finite set of fixed points, it follows that
there exists a countable set $\mathcal{C} \subset F$ such that for every $p \in F - \mathcal{C}$ the group $\mathrm{Iso}_G(p)$ reduces to the identity.
Thus any two leaves $\mathcal L_p$ and $\mathcal L_q$ with $p,q \in F- \mathcal{C}$ are regular covers of $\Sigma_{g,k}$ with isomorphic groups of covering transformations. It follows that $\mathcal L_p$ and $\mathcal L_q$ are
homeomorphic.
Since $P_{\Phi\ast}(\pi_1(\mathcal{L}_p))=P_{\Phi\ast}(\pi_1(\mathcal{L}_q))=\mathrm{ker}\Phi$, Theorem \ref{THM:covers} shows that these leaves are homeomorphic to one of the real surfaces of the list in this theorem.

\end{proof}

We call \textit{Riccati} an holomorphic foliation $\mathcal{F}$ of complex dimension 1 on a compact connected complex surface if there exists a rational fibration $P:M\rightarrow\Sigma_g$, whose generic fiber is transversal to $\mathcal{F}$ and biholomorphic to $\mathbb{CP}^1$. Possibly the fibration has singular fibers, but these belong to a finite set $\{P^{-1}(p_j)\}_{j=1}^n$ of $\mathcal{F}$-invariant fibers, $\Sigma_g-\{p_j\}_{j=1}^{n}=\Sigma_{g,n}$. The restriction $\mathcal{F}|_{M*}$, with $M*=M-\cup_{j=1}^{n}P^{-1}(p_j)$, is a foliation obtained by suspending the global holonomy $\Phi_{\mathcal{F}}$ of $\mathcal{F}$
\[
\Phi_{\mathcal{F}}:\pi_1(\Sigma_{g,n})\rightarrow\mathrm{PSL}(2,\mathbb{C}),
\]
(see for instance \cite{gomez1989holomorphic}). As any biholomorphism in $\Phi_{\mathcal{F}}(\pi_1(\Sigma_{g,n}))$ has at most two fixed points, the Theorem\ref{THM:covers} implies the following result:

\begin{corollary}\label{crl:Riccati}
Let $\mathcal{F}$ be a Riccati foliation on compact	 complex surface $M$. Then any leaf of $\mathcal{F}$ outside of a countable set of leaves is homeomorphic to one of the 11 topological types given at the Theorem \ref{THM:covers}.
\end{corollary}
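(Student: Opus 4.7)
The plan is to reduce directly to the suspension theorem stated just above the corollary. Set $M^{\ast}=M-\bigcup_{j=1}^{n}P^{-1}(p_j)$; by the discussion preceding the statement, the restricted foliation $\mathcal{F}|_{M^{\ast}}$ is exactly the suspension of the global holonomy representation $\Phi_{\mathcal{F}}\colon \pi_1(\Sigma_{g,n})\to\mathrm{PSL}(2,\mathbb{C})$ with fiber $\mathbb{CP}^1$. Every leaf of $\mathcal{F}$ that is not contained in one of the finitely many invariant fibers $P^{-1}(p_j)$ is therefore a leaf of this suspension foliation, and it suffices to analyze the latter.

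To apply the previous theorem one must verify that the image under $\Phi_{\mathcal{F}}$ of each canonical generator in $\mathfrak{G}_{g,n}$ is a diffeomorphism of $\mathbb{CP}^1$ with finite fixed-point set. This is immediate from the classification of M\"obius transformations: every non-identity element of $\mathrm{PSL}(2,\mathbb{C})$ has at most two fixed points on $\mathbb{CP}^1$. Should some canonical generator lie in $\ker\Phi_{\mathcal{F}}$, I would either work with the quotient representation obtained by dropping such generators or appeal directly to what is actually used in the proof of the preceding theorem, namely that every non-trivial element of the global holonomy group has finite fixed-point set; this property persists unchanged. Invoking the theorem then yields a countable exceptional set $\mathcal{E}^{\ast}\subset P^{-1}(o)\cong\mathbb{CP}^1$ such that every leaf of $\mathcal{F}|_{M^{\ast}}$ meeting $\mathbb{CP}^1-\mathcal{E}^{\ast}$ is homeomorphic to one of the eleven topological types listed in Theorem \ref{THM:covers}.

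To finish, each $\mathcal{F}$-invariant fiber $P^{-1}(p_j)$ contributes only finitely many leaves (a rational curve minus the finitely many singularities of $\mathcal{F}$ lying on it, which is connected), so the total number of leaves contained in the invariant fibers is finite. Adjoining these finitely many leaves to the countable family already produced keeps the exceptional set countable, and the corollary follows. The only delicate point I foresee is the bookkeeping for canonical generators that map to the identity in $\mathrm{PSL}(2,\mathbb{C})$: this is entirely formal, but it requires a careful invocation of the suspension theorem rather than a mechanical quotation of its hypothesis.
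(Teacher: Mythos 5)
Your argument is essentially the paper's: the paper likewise reduces to the suspension theorem by restricting $\mathcal{F}$ to $M^{\ast}=M-\bigcup_j P^{-1}(p_j)$ and observing that every nontrivial element of $\mathrm{PSL}(2,\mathbb{C})$ fixes at most two points of $\mathbb{CP}^1$. Your extra care about canonical generators lying in $\ker\Phi_{\mathcal{F}}$ (whose fixed-point set is all of $\mathbb{CP}^1$, so the theorem's hypothesis as literally stated fails) and about the finitely many leaves inside the invariant fibers is a welcome tightening of a point the paper passes over silently, but it does not change the route.
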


\begin{example}\label{exp:BCT}
For each $x\in\C$, let $\Gamma_x$ be the subgroup of $\mathrm{PSL}(2,\C)$ generated by
\[
e_1=\begin{pmatrix}
1 & x \\
0 & 1
\end{pmatrix},\quad\quad e_2=\begin{pmatrix}
1 & 0 \\
x & 1
\end{pmatrix}.\]
If $|x|\geq2$, the ping-pong lemma (\cite[Theorem 4.3.1]{loh2011geometric}) implies  that the subgroup $\Gamma_x$ is free of rank two.

 Consider the suspension of the homomorphism
\[\begin{array}{llll}
\Phi:&\pi_1(\Sigma_3,o)&\rightarrow &  \mathrm{PSL}(2,\C)\\
  & a_1 & \mapsto & e_1\\
  & a_2 & \mapsto & e_2 \\
  & a_3,b_j & \mapsto & \mathit{id},
\end{array}\]
where $\{a_j,b_j\}_{j=1}^3=\mathfrak{G}_{3,0}$. This suspension gives a nonsingular Riccati foliation $\mathcal{F}$ on a compact complex surface $M$ with adapted fibration $P_\Phi:M\rightarrow \Sigma_3$.
The generic leaf $\mathcal{L}$ of $\mathcal{F}$ is a regular cover of $\Sigma_{3}$ with its group of deck transformation isomorphic to $\Gamma_x$. In particular, $\Sigma^{H}_3\cong\mathcal{L}$ whit $H$ being the kernel of $\Phi$. From \cite[Theorem8.2.14]{loh2011geometric}, we conclude that $\mathcal{E}(\Gamma_x)$ is a infinite set. Thus, $\mathcal{E}(\mathcal{L})$ is a Cantor set. The figure below shows the closure of the union of the lifts $\tilde{D}_{3}(\tilde{o}')$ of the fundamental domain $D_3$ of $\Sigma_3$ at each vertex in the ball $B_4(\tilde{o})\subset\mathcal{L}$. This closed surface has nontrivial genus, then $\mathcal{E}'(\mathcal{L})$ is a Cantor set. It follows that $\mathcal{L}$ is homeomorphic to the blooming Cantor tree.

\begin{tikzpicture}[scale=.75]
\foreach \x in {2,13}
\foreach \y in {-1,4.5}
{
\draw[white] (\x,\y) circle (1pt);
}

\draw[black] (8,2.14) circle (1pt);
\draw (8,2.4) node {$\tilde{o}$};

\foreach \x in {8.2}
\foreach \y in {2.7}
{
\draw[line width=0.5pt] (\x,\y) .. controls (\x-.02,\y-.2) .. (\x+.1,\y-.4) .. controls (\x+.2,\y-.5) and (\x+.4,\y-.52) .. (\x+.5,\y-.5);
}
\foreach \x in {7.8}
\foreach \y in {2.7}
{
\draw[line width=0.5pt] (\x,\y) .. controls (\x+.02,\y-.2) .. (\x-.1,\y-.4) .. controls (\x-.2,\y-.5) and (\x-.4,\y-.52) .. (\x-.5,\y-.5);
}
\foreach \x in {8.2}
\foreach \y in {1.3}
{
\draw[line width=0.5pt] (\x,\y) .. controls (\x-.02,\y+.2) .. (\x+.1,\y+.4) .. controls (\x+.2,\y+.5) and (\x+.4,\y+.52) .. (\x+.5,\y+.5);
}
\foreach \x in {7.8}
\foreach \y in {1.3}
{
\draw[line width=0.5pt] (\x,\y) .. controls (\x+.02,\y+.2) .. (\x-.1,\y+.4) .. controls (\x-.2,\y+.5) and (\x-.4,\y+.52) .. (\x-.5,\y+.5);
}
\draw (7.8,2) arc (135:45:.28);
\draw (7.7,2.075) arc (225:315:.4);
\draw (8.95,2) arc (135:45:.14);
\draw (8.9,2.05) arc (225:315:.2);
\draw (6.85,2) arc (135:45:.14);
\draw (6.8,2.05) arc (225:315:.2);
\draw (7.9,3.1) arc (135:45:.14);
\draw (7.85,3.15) arc (225:315:.2);
\draw (7.9,0.9) arc (135:45:.14);
\draw (7.85,0.95) arc (225:315:.2);
\foreach \x in {8.4}
\foreach \y in {2.9}
{
\draw[line width=0.5pt] (\x,\y) .. controls (\x,\y+.1) .. (\x-.1,\y+.13) .. controls (\x-.2,\y+.1) and (\x-.22,\y-.02) .. (\x-.2,\y-.2);
}

\foreach \x in {7.6}
\foreach \y in {1.1}
{
\draw[line width=0.5pt] (\x,\y) .. controls (\x,\y-.1) .. (\x+.1,\y-.13) .. controls (\x+.2,\y-.1) and (\x+.22,\y+.02) .. (\x+.2,\y+.2);
}

\foreach \x in {7.6}
\foreach \y in {2.9}
{
\draw[line width=0.5pt] (\x,\y) .. controls (\x,\y+.1) .. (\x+.1,\y+.13) .. controls (\x+.2,\y+.1) and (\x+.22,\y-.02) .. (\x+.2,\y-.2);
}

\foreach \x in {8.4}
\foreach \y in {1.1}
{
\draw[line width=0.5pt] (\x,\y) .. controls (\x,\y-.1) .. (\x-.1,\y-.13) .. controls (\x-.2,\y-.1) and (\x-.22,\y+.02) .. (\x-.2,\y+.2);
}

\foreach \x in {8.9}
\foreach \y in {2.4}
{
\draw[line width=0.5pt] (\x,\y) .. controls (\x+.1,\y) .. (\x+.13,\y-.1) .. controls (\x+.1,\y-.2) and (\x-.02,\y-.22) .. (\x-.2,\y-.2);
}
\foreach \x in {8.9}
\foreach \y in {1.6}
{
\draw[line width=0.5pt] (\x,\y) .. controls (\x+.1,\y) .. (\x+.13,\y+.1) .. controls (\x+.1,\y+.2) and (\x-.02,\y+.22) .. (\x-.2,\y+.2);
}
\foreach \x in {7.1}
\foreach \y in {1.6}
{
\draw[line width=0.5pt] (\x,\y) .. controls (\x-.1,\y) .. (\x-.13,\y+.1) .. controls (\x-.1,\y+.2) and (\x+.02,\y+.22) .. (\x+.2,\y+.2);
}
\foreach \x in {7.1}
\foreach \y in {2.4}
{
\draw[line width=0.5pt] (\x,\y) .. controls (\x-.1,\y) .. (\x-.13,\y-.1) .. controls (\x-.1,\y-.2) and (\x+.02,\y-.22) .. (\x+.2,\y-.2);
}
\foreach \x in {6.5}
\foreach \y in {2.4}
{
\draw[line width=0.5pt] (\x,\y) .. controls (\x+.13,\y-.02) and (\x+.2,\y-.13) .. (\x+.15,\y-.26) .. controls (\x+.03,\y-.3) and (\x-.12,\y-.33) .. (\x-.1,\y-.1);
}
\foreach \x in {7.7}
\foreach \y in {3.6}
{
\draw[line width=0.5pt] (\x,\y) .. controls (\x+.13,\y-.02) and (\x+.2,\y-.13) .. (\x+.15,\y-.26) .. controls (\x+.03,\y-.3) and (\x-.12,\y-.33) .. (\x-.1,\y-.1);
}
\foreach \x in {8.3}
\foreach \y in {3.6}
{
\draw[line width=0.5pt] (\x,\y) .. controls (\x-.13,\y-.02) and (\x-.2,\y-.13) .. (\x-.15,\y-.26) .. controls (\x-.03,\y-.3) and (\x+.12,\y-.33) .. (\x+.1,\y-.1);
}
\foreach \x in {9.5}
\foreach \y in {2.4}
{
\draw[line width=0.5pt] (\x,\y) .. controls (\x-.13,\y-.02) and (\x-.2,\y-.13) .. (\x-.15,\y-.26) .. controls (\x-.03,\y-.3) and (\x+.12,\y-.33) .. (\x+.1,\y-.1);
}
\foreach \x in {9.5}
\foreach \y in {1.6}
{
\draw[line width=0.5pt] (\x,\y) .. controls (\x-.13,\y+.02) and (\x-.2,\y+.13) .. (\x-.15,\y+.26) .. controls (\x-.03,\y+.3) and (\x+.12,\y+.33) .. (\x+.1,\y+.1);
}
\foreach \x in {8.3}
\foreach \y in {0.4}
{
\draw[line width=0.5pt] (\x,\y) .. controls (\x-.13,\y+.02) and (\x-.2,\y+.13) .. (\x-.15,\y+.26) .. controls (\x-.03,\y+.3) and (\x+.12,\y+.33) .. (\x+.1,\y+.1);
}
\foreach \x in {7.7}
\foreach \y in {0.4}
{
\draw[line width=0.5pt] (\x,\y) .. controls (\x+.13,\y+.02) and (\x+.2,\y+.13) .. (\x+.15,\y+.26) .. controls (\x+.03,\y+.3) and (\x-.12,\y+.33) .. (\x-.1,\y+.1);
}
\foreach \x in {6.5}
\foreach \y in {1.6}
{
\draw[line width=0.5pt] (\x,\y) .. controls (\x+.13,\y+.02) and (\x+.2,\y+.13) .. (\x+.15,\y+.26) .. controls (\x+.03,\y+.3) and (\x-.12,\y+.33) .. (\x-.1,\y+.1);
}
\draw (8.95,2) arc (135:45:.14);
\draw (8.9,2.05) arc (225:315:.2);
\draw (9.625,2) arc (135:45:.07);
\draw (9.6,2.025) arc (225:315:.1);
\draw (9.125,1.5) arc (135:45:.07);
\draw (9.1,1.525) arc (225:315:.1);
\draw (9.125,2.5) arc (135:45:.07);
\draw (9.1,2.525) arc (225:315:.1);
\draw (8.425,3.2) arc (135:45:.07);
\draw (8.4,3.225) arc (225:315:.1);
\draw (7.925,3.7) arc (135:45:.07);
\draw (7.9,3.725) arc (225:315:.1);
\draw (7.425,3.2) arc (135:45:.07);
\draw (7.4,3.225) arc (225:315:.1);
\draw (6.725,2.5) arc (135:45:.07);
\draw (6.7,2.525) arc (225:315:.1);
\draw (6.225,2) arc (135:45:.07);
\draw (6.2,2.025) arc (225:315:.1);
\draw (6.725,1.5) arc (135:45:.07);
\draw (6.7,1.525) arc (225:315:.1);
\draw (7.425,0.8) arc (135:45:.07);
\draw (7.4,0.825) arc (225:315:.1);
\draw (7.925,0.3) arc (135:45:.07);
\draw (7.9,0.325) arc (225:315:.1);
\draw (8.425,0.8) arc (135:45:.07);
\draw (8.4,0.825) arc (225:315:.1);
\foreach \x in {7.1}
\foreach \y in {2.6}
{
\draw[line width=0.5pt] (\x,\y) .. controls (\x-.1,\y-.04) .. (\x-.2,\y-.03) .. controls (\x-.23,\y) and (\x-.21,\y+.1) .. (\x-.2,\y+.2);
}
\foreach \x in {6.5}
\foreach \y in {2.6}
{
\draw[line width=0.5pt] (\x,\y) .. controls (\x+.1,\y-.04) .. (\x+.2,\y-.03) .. controls (\x+.23,\y) and (\x+.21,\y+.1) .. (\x+.2,\y+.2);
}

\foreach \x in {7.2}
\foreach \y in {0.9}
{
\draw[line width=0.5pt] (\x,\y) .. controls (\x+.1,\y-.04) .. (\x+.2,\y-.03) .. controls (\x+.23,\y) and (\x+.21,\y+.1) .. (\x+.2,\y+.2);
}
\foreach \x in {6}
\foreach \y in {2.1}
{
\draw[line width=0.5pt] (\x,\y) .. controls (\x+.1,\y-.04) .. (\x+.2,\y-.03) .. controls (\x+.23,\y) and (\x+.21,\y+.1) .. (\x+.2,\y+.2);
}
\foreach \x in {7.2}
\foreach \y in {3.3}
{
\draw[line width=0.5pt] (\x,\y) .. controls (\x+.1,\y-.04) .. (\x+.2,\y-.03) .. controls (\x+.23,\y) and (\x+.21,\y+.1) .. (\x+.2,\y+.2);
}
\foreach \x in {7.7}
\foreach \y in {3.8}
{
\draw[line width=0.5pt] (\x,\y) .. controls (\x+.1,\y-.04) .. (\x+.2,\y-.03) .. controls (\x+.23,\y) and (\x+.21,\y+.1) .. (\x+.2,\y+.2);
}
\foreach \x in {7.7}
\foreach \y in {3.8}
{
\draw[line width=0.5pt] (\x,\y) .. controls (\x+.1,\y-.04) .. (\x+.2,\y-.03) .. controls (\x+.23,\y) and (\x+.21,\y+.1) .. (\x+.2,\y+.2);
}
\foreach \x in {8.9}
\foreach \y in {2.6}
{
\draw[line width=0.5pt] (\x,\y) .. controls (\x+.1,\y-.04) .. (\x+.2,\y-.03) .. controls (\x+.23,\y) and (\x+.21,\y+.1) .. (\x+.2,\y+.2);
}
\foreach \x in {6}
\foreach \y in {1.9}
{
\draw[line width=0.5pt] (\x,\y) .. controls (\x+.1,\y+.04) .. (\x+.2,\y+.03) .. controls (\x+.23,\y) and (\x+.21,\y-.1) .. (\x+.2,\y-.2);
}
\foreach \x in {6.5}
\foreach \y in {1.4}
{
\draw[line width=0.5pt] (\x,\y) .. controls (\x+.1,\y+.04) .. (\x+.2,\y+.03) .. controls (\x+.23,\y) and (\x+.21,\y-.1) .. (\x+.2,\y-.2);
}
\foreach \x in {7.2}
\foreach \y in {.7}
{
\draw[line width=0.5pt] (\x,\y) .. controls (\x+.1,\y+.04) .. (\x+.2,\y+.03) .. controls (\x+.23,\y) and (\x+.21,\y-.1) .. (\x+.2,\y-.2);
}
\foreach \x in {7.7}
\foreach \y in {.2}
{
\draw[line width=0.5pt] (\x,\y) .. controls (\x+.1,\y+.04) .. (\x+.2,\y+.03) .. controls (\x+.23,\y) and (\x+.21,\y-.1) .. (\x+.2,\y-.2);
}
\foreach \x in {8.9}
\foreach \y in {1.4}
{
\draw[line width=0.5pt] (\x,\y) .. controls (\x+.1,\y+.04) .. (\x+.2,\y+.03) .. controls (\x+.23,\y) and (\x+.21,\y-.1) .. (\x+.2,\y-.2);
}
\foreach \x in {7.2}
\foreach \y in {3.1}
{
\draw[line width=0.5pt] (\x,\y) .. controls (\x+.1,\y+.04) .. (\x+.2,\y+.03) .. controls (\x+.23,\y) and (\x+.21,\y-.1) .. (\x+.2,\y-.2);
}
\foreach \x in {8.3}
\foreach \y in {3.8}
{
\draw[line width=0.5pt] (\x,\y) .. controls (\x-.1,\y-.04) .. (\x-.2,\y-.03) .. controls (\x-.23,\y) and (\x-.21,\y+.1) .. (\x-.2,\y+.2);
}
\foreach \x in {8.8}
\foreach \y in {3.3}
{
\draw[line width=0.5pt] (\x,\y) .. controls (\x-.1,\y-.04) .. (\x-.2,\y-.03) .. controls (\x-.23,\y) and (\x-.21,\y+.1) .. (\x-.2,\y+.2);
}
\foreach \x in {9.5}
\foreach \y in {2.6}
{
\draw[line width=0.5pt] (\x,\y) .. controls (\x-.1,\y-.04) .. (\x-.2,\y-.03) .. controls (\x-.23,\y) and (\x-.21,\y+.1) .. (\x-.2,\y+.2);
}
\foreach \x in {10}
\foreach \y in {2.1}
{
\draw[line width=0.5pt] (\x,\y) .. controls (\x-.1,\y-.04) .. (\x-.2,\y-.03) .. controls (\x-.23,\y) and (\x-.21,\y+.1) .. (\x-.2,\y+.2);
}
\foreach \x in {8.8}
\foreach \y in {0.9}
{
\draw[line width=0.5pt] (\x,\y) .. controls (\x-.1,\y-.04) .. (\x-.2,\y-.03) .. controls (\x-.23,\y) and (\x-.21,\y+.1) .. (\x-.2,\y+.2);
}
\foreach \x in {8.8}
\foreach \y in {3.1}
{
\draw[line width=0.5pt] (\x,\y) .. controls (\x-.1,\y+.04) .. (\x-.2,\y+.03) .. controls (\x-.23,\y) and (\x-.21,\y-.1) .. (\x-.2,\y-.2);
}
\foreach \x in {10}
\foreach \y in {1.9}
{
\draw[line width=0.5pt] (\x,\y) .. controls (\x-.1,\y+.04) .. (\x-.2,\y+.03) .. controls (\x-.23,\y) and (\x-.21,\y-.1) .. (\x-.2,\y-.2);
}
\foreach \x in {9.5}
\foreach \y in {1.4}
{
\draw[line width=0.5pt] (\x,\y) .. controls (\x-.1,\y+.04) .. (\x-.2,\y+.03) .. controls (\x-.23,\y) and (\x-.21,\y-.1) .. (\x-.2,\y-.2);
}
\foreach \x in {8.8}
\foreach \y in {.7}
{
\draw[line width=0.5pt] (\x,\y) .. controls (\x-.1,\y+.04) .. (\x-.2,\y+.03) .. controls (\x-.23,\y) and (\x-.21,\y-.1) .. (\x-.2,\y-.2);
}
\foreach \x in {8.3}
\foreach \y in {.2}
{
\draw[line width=0.5pt] (\x,\y) .. controls (\x-.1,\y+.04) .. (\x-.2,\y+.03) .. controls (\x-.23,\y) and (\x-.21,\y-.1) .. (\x-.2,\y-.2);
}
\foreach \x in {7.1}
\foreach \y in {1.4}
{
\draw[line width=0.5pt] (\x,\y) .. controls (\x-.1,\y+.04) .. (\x-.2,\y+.03) .. controls (\x-.23,\y) and (\x-.21,\y-.1) .. (\x-.2,\y-.2);
}
\draw (7.9,4) arc (225:315:.14);
\draw (8.4,3.5) arc (225:315:.14);
\draw (9.1,2.8) arc (225:315:.14);
\draw (9.6,2.3) arc (225:315:.14);
\draw (7.4,3.5) arc (225:315:.14);
\draw (6.7,2.8) arc (225:315:.14);
\draw (6.2,2.3) arc (225:315:.14);
\draw (8.4,1.1) arc (225:315:.14);
\draw (7.4,1.1) arc (225:315:.14);
\draw (8.4,.5) arc (225:315:.14);
\draw (7.4,.5) arc (225:315:.14);
\draw (8.4,2.9) arc (225:315:.14);
\draw (7.4,2.9) arc (225:315:.14);
\draw (9.1,1.2) arc (225:315:.14);
\draw (9.6,1.7) arc (225:315:.14);
\draw (7.9,0) arc (225:315:.14);
\draw (6.7,1.2) arc (225:315:.14);
\draw (6.2,1.7) arc (225:315:.14);
\draw (7.9,4) arc (135:45:.14);
\draw (8.4,3.5) arc (135:45:.14);
\draw (9.1,2.8) arc (135:45:.14);
\draw (9.6,2.3) arc (135:45:.14);
\draw (7.4,3.5) arc (135:45:.14);
\draw (6.7,2.8) arc (135:45:.14);
\draw (6.2,2.3) arc (135:45:.14);
\draw (8.4,1.1) arc (135:45:.14);
\draw (7.4,1.1) arc (135:45:.14);
\draw (7.7,3.8) arc (30:-30:.195);
\draw (8.3,3.8) arc (30:-30:.195);
\draw (7.2,3.3) arc (30:-30:.195);
\draw (7.1,2.6) arc (30:-30:.195);
\draw (6.5,2.6) arc (30:-30:.195);
\draw (6,2.1) arc (30:-30:.195);
\draw (6.5,1.6) arc (30:-30:.195);
\draw (7.1,1.6) arc (30:-30:.195);
\draw (7.2,.9) arc (30:-30:.195);
\draw (7.7,.4) arc (30:-30:.195);
\draw (8.3,.4) arc (30:-30:.195);
\draw (8.8,.9) arc (30:-30:.195);
\draw (8.9,1.6) arc (30:-30:.195);
\draw (9.5,1.6) arc (30:-30:.195);
\draw (10,2.1) arc (30:-30:.195);
\draw (9.5,2.6) arc (30:-30:.195);
\draw (8.9,2.6) arc (30:-30:.195);
\draw (8.8,3.3) arc (30:-30:.195);
\draw (7.7,3.8) arc (150:210:.195);
\draw (8.3,3.8) arc (150:210:.195);
\draw (7.2,3.3) arc (150:210:.195);
\draw (7.1,2.6) arc (150:210:.195);
\draw (6.5,2.6) arc (150:210:.195);
\draw (6,2.1) arc (150:210:.195);
\draw (6.5,1.6) arc (150:210:.195);
\draw (7.1,1.6) arc (150:210:.195);
\draw (7.2,.9) arc (150:210:.195);
\draw (7.7,.4) arc (150:210:.195);
\draw (8.3,.4) arc (150:210:.195);
\draw (8.8,.9) arc (150:210:.195);
\draw (9.5,1.6) arc (150:210:.195);
\draw (10,2.1) arc (150:210:.195);
\draw (9.5,2.6) arc (150:210:.195);
\draw (8.9,2.6) arc (150:210:.195);
\draw (8.8,3.3) arc (150:210:.195);
\draw (8.9,1.6) arc (150:210:.195);

\draw (8,-0.5) node {$B_4(\tilde{o})$};
\foreach \x in {12.5}
\foreach \y in {1}
{
\draw[line width=0.5pt] (\x,\y) .. controls (\x-.01,\y-.1) .. (\x+.05,\y-.2) .. controls (\x+.1,\y-.25) and (\x+.2,\y-.26) .. (\x+.25,\y-.25);
}
\foreach \x in {12.3}
\foreach \y in {1}
{
\draw[line width=0.5pt] (\x,\y) .. controls (\x+.01,\y-.1) .. (\x-.05,\y-.2) .. controls (\x-.1,\y-.25) and (\x-.2,\y-.26) .. (\x-.25,\y-.25);
}
\foreach \x in {12.5}
\foreach \y in {.3}
{
\draw[line width=0.5pt] (\x,\y) .. controls (\x-.01,\y+.1) .. (\x+.05,\y+.2) .. controls (\x+.1,\y+.25) and (\x+.2,\y+.26) .. (\x+.25,\y+.25);
}
\foreach \x in {12.3}
\foreach \y in {.3}
{
\draw[line width=0.5pt] (\x,\y) .. controls (\x+.01,\y+.1) .. (\x-.05,\y+.2) .. controls (\x-.1,\y+.25) and (\x-.2,\y+.26) .. (\x-.25,\y+.25);
}
\draw (12.3,.3) arc (225:315:.14);
\draw (12.3,1) arc (225:315:.14);
\draw (12.3,1) arc (135:45:.14);
\draw (12.05,.75) arc (150:210:.195);
\draw (12.75,.75) arc (150:210:.195);
\draw (12.05,.75) arc (30:-30:.195);
\draw (12.75,.75) arc (30:-30:.195);
\draw (12.4,0) node {$D_3(\tilde{o}')$};
\draw (12.3,.65) arc (135:45:.14);
\draw (12.25,.7) arc (225:315:.2);

\end{tikzpicture}

The blooming Cantor tree without an infinite discrete set as the generic leaf is obtained from a bimeromorphism $\varphi:M\rightarrow M'$, which is the composition of one blow-up $b$ in a regular point $p$ in a fibre $F$ and one blow-down on the closure of $b^{-1}(F)-E$, where $E=b^{-1}(p)$ (see \cite[p.54]{brunella2004}). The bimeromorphism sends a trivial neighborhood $U\subset M$ of a fibre $F$ to a trivial neighborhood $U'\subset M'$ with an induced foliation, which has two singularities on the fibre $F'$, one logarithmic and one dicritical. The holonomy around $F'$ is trivial. Therefore the generic leaf $\mathcal{L}$ is a normal cover of $\Sigma_{3,1}$. Considering that the holonomy of the border cycle is trivial, we have that $\mathcal{L}$ is homeomorphic to the blooming Cantor tree deprived from a infinite discrete set.
\end{example}

\begin{example}\label{exp:CT}
Let $\Gamma_x,e_1,e_2$ be as in example above, with $x\geq2$. Consider the foliation $\mathcal{F}$ on the suspension $P_\Phi:M\rightarrow\Sigma_{2}$ generated by the representation 
\[\begin{array}{llll}
\Phi:&\pi_1(\Sigma_2,o)&\rightarrow &  \mathrm{PSL}(2,\C)\\
  & a_1 & \mapsto & e_1\\
  & a_2 & \mapsto & e_2 \\
  & b_j & \mapsto & \mathit{id},
\end{array}\]
where $\{a_j,b_j\}_{j=1}^2=\mathfrak{G}_{2,0}$. An analysis similar to that in the developing of Example \ref{exp:BCT} shows that the generic leaf of $\mathcal{F}$ is homeomorphic to the Cantor tree. Also using the composition of a blow-up at point $p\in F$ and a blow-down of the strict transform of $F$, we get a bimeromorphism $\phi:M\rightarrow M'$. The foliation $\phi^{-1\ast}\mathcal{F}$ on $M'$ has generic leaf homeomorphic to Cantor tree without an infinite discrete set.
\end{example}

\begin{example}
Let $\Sigma_2$ be the compact Riemann surface of genus 2 and $\{a_1,b_1,a_2,b_2\}$ be the canonical generators of $\pi_1(\Sigma_2,o)$ for a point $o$ in $\Sigma_2$. Consider the homomorphism defined by
\[\begin{array}{lll}
\rho:\pi_1(\Sigma_2,o)&\rightarrow & \mathrm{PSL}(2,\C)\\
a_1 &\mapsto & T\\
a_2,b_j &\mapsto &\mathit{id},
\end{array}\]
where $T(z)=z+1$, which fixed a sinlge point at infinity. From the suspension of this homomorphism we get a non singular Riccati foliation with a compact leaf.
Since $\rho(\pi_1(\Sigma_2,o))$ is isomorphic to $\mathbb{Z}$, \cite[Theorem 8.2.14]{loh2011geometric} shows that any generic leaf $\mathcal{L}$ of $\mathcal{F}$ has two ends.  We conclude from the construct of the surface $B_n(\tilde{o})$, with $n\geq1$,  that the generic leaf $\mathcal{L}$ has infinite genus. Therefore $\mathcal{L}$ is homeomorphic to Jacob's ladder. Using a bimeromorphism as in Examples \ref{exp:BCT} and \ref{exp:CT} , it is possible to give an example of foliation with generic leaf homeomorphic to Jacob's ladder without an infinite discrete set.\end{example}

We now consider holomorphic homogeneous foliations $\mathcal{F}$ in $\mathbb{CP}^2$
which are defined in an affine chart $U\subset\mathbb{CP}^2$ by a homogeneous $1$-form
\[
    \omega=h_{1}(x,y)dx+h_2(x,y)dy\quad\text{and}\quad\omega(R)\neq0
\]
where $h_1,h_2$ are homogeneous polynomials of the same degree $\nu$, without common factors and $R$ is the radial vector field. The foliation $\mathcal{F}$ defined by $\omega$ extends to a foliation of $\mathbb{CP}^2$ leaving the line at infinity invariant.
After making a linear change of coordinates we can  assume that
$x$ does not divide $\omega(R) = x h_1 + y h_2$.
The roots of this polynomial $\omega(R)=\prod_{1}^{n}(y-t_{j}x)^{\nu_j}$
correspond to $\mathcal{F}$-invariant lines $l_j$ through the origin. 

Blowing-up $(0,0) \in U$ we obtain a Riccati foliation $\mathcal{F}'$ 
with a finite set $\{\tilde{l}_j\}_{j=1}^n$ of $\mathcal{F}'$-invariants fibers corresponding to the strict transforms of the lines $l_j$, and fibration $P:M\rightarrow\Sigma_0$. The global holonomy group $\Phi_{\mathcal{F}'}(\pi_1(\Sigma_{0,n}))$ of $\mathcal{F}'$ is linear and isomorphic to the group generated by $<\exp^{2\pi i\eta_j}z>$, where $\eta_j$ is the Camacho-Sad index of the singularity of $\mathcal{F}'$ at $p_j=\tilde{l}_j\cap E$ in the exceptional divisor $E$ \cite{brunella2004}. In particular, the global holonomy $\Phi_{\mathcal{F}'}(\pi_1(\Sigma_{0,n}))$ is abelian and isomorphic to the multiplicative group $<\exp(2\pi i\eta_j)>_{j=1}^n$ in $\mathbb C^*$. Except for the leaves corresponding to invariant fibers, the exceptional divisor $E$ and the line at infinity, all the other leaves $\mathcal{L}$ are homeomorphic to a regular cover of $\Sigma_{0,n}$ with the same group of deck transformations. We will describe the possible topological types of this leaves. 

In the proofs of \cite[Theorem 1]{valdez2009billiards} and \cite[Theorem 8]{goncharuk2014genera}, the authors show that for generic $\{\eta_j\}$ and $n\geq3$ it is possible to give two cycles $\gamma_1,\gamma_2$ in $[\pi_1(\Sigma_{0,n},o),\pi_1(\Sigma_{0,n},o)]$ whose lifts $\gamma_1',\gamma_2'$ at $p\in\mathcal{L}\cap F$ are nontrivial in $\pi_1(L)$ and have intersection index equal to 1, with $F=P^{-1}(o)$. Thus the leaf has an attached handle. But this hold for any point in $\mathcal{L}\cap F$ since $\mathcal{L}$ has infinite genus. The following statement is an extension of the construction of these two cycles with intersection index 1 in regular covers $\Sigma_{g,n}^H$, with $[\pi_1(\Sigma_{g,n}),\pi_1(\Sigma_{g,n})]\cap H\neq\{\mathit{id}\}$.

\begin{proposition}\label{Propt:AbelianCovers}
Let $\Sigma_{g,n},\Sigma_{g,n}^H,H$ and $\mathfrak{G}_{g,n}$ be as defined above. The following assertions give conditions on $g,n$ and $\mathfrak{G}_{g,n}$ to have to cycles with intersection index one.
\begin{itemize}
\item[a)] For $g=0,n\geq3$ and some pair $c_j,c_l\in\mathfrak{G}_{0,n}$. If it is satisfies that the cycles $\gamma_1=[c_i^m,c_j^l],\gamma_2=[c_i^{-m'},c_j^l]$ belong to $H$ and $c_i^\alpha,c_j^\beta\notin H$ for $\alpha\leq m+m'$ and $\beta\leq l$, with $m,m',l\in\mathbb{N}_{>0}$.
\item[b)] For $g\geq1,n\geq0$ and $a_j,b_j,d\in\mathfrak{G}_{g,n}$, with $a_j\neq d\neq b_j$ and $2g+n\geq 3$. 
If it is satisfies that the cycles $\gamma_1=a_j^m d^{-k}a_j^{-2m}d^k a_j^m$ and $gamma_2=b_j^l d^{k'}b_j^{-2l}d^{-k'} b_j^l$ are contained in $H$ and $a_j^\alpha,b_j^\beta d^\delta\notin H$ for $\alpha\leq m,\beta\leq l$, $\delta\leq k+k'$,with $m,l,k,k'\in\mathbb{N}_{>0}$. 
In the case, there are $m$ or $l$ in $\mathbb{N}_{>0}$ such that they are minimal with the property $a_j^m\in H$ or $b_j^l \in H$. 
It is possible to change the cycles $\gamma_1$ by $a_j^m$ or $\gamma_2$ by $b_j^l$.

\end{itemize}
Then the lifts $\tilde{\gamma_1},\tilde{\gamma_2}$ of the cycles $\gamma_1,\gamma_2$ at a point $\tilde{o}\in\Sigma_{g,n}^H$ have intersection index one. Moreover, if $\Sigma_{g,n}^H$ is a infinite cover then it has infinite genus.
\end{proposition}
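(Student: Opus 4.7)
The plan is to lift the words $\gamma_1,\gamma_2$ explicitly as closed edge-paths in the Cayley-graph embedding $\mathrm{Cayley}(A_{g,n}^H)\subset\Sigma_{g,n}^H$ introduced earlier, read off the algebraic intersection number at $\tilde{o}$ from the combinatorics of that embedding, and then translate one transverse pair by the deck-transformation action to produce infinitely many independent symplectic pairs in $H_1(\Sigma_{g,n}^H;\mathbb{Z})$.

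First, because $\gamma_1,\gamma_2\in H=\ker\varrho_H$, the lifts $\tilde{\gamma}_1,\tilde{\gamma}_2$ starting at $\tilde{o}$ close up. In case~(a) the minimality hypotheses $c_i^\alpha\notin H$ for $\alpha\le m+m'$ and $c_j^\beta\notin H$ for $\beta\le l$ force the intermediate vertices traversed by each lift to be pairwise distinct elements of $A_{g,n}^H$, so neither lift revisits $\tilde{o}$ in the middle. The two \emph{horizontal} strokes $\tilde{c}_i^{\,m}$ and $\tilde{c}_i^{\,-m'}$ exit $\tilde{o}$ along opposite edges (the $c_i$-edge versus the $c_i^{-1}$-edge), while the two \emph{vertical} strokes of length $l$ emanate from the distinct vertices $c_i^{\,m}\cdot\tilde{o}$ and $c_i^{\,-m'}\cdot\tilde{o}$. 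The only coincidence occurs on the common terminal $c_j^{-l}$-stroke, which in the quotient runs from the vertex $c_j^{\,l}\cdot\tilde{o}$ down to $\tilde{o}$ for both lifts, since $\gamma_1,\gamma_2\in H$ forces $c_i^{\pm m}c_j^{\,l}c_i^{\mp m}\equiv c_j^{\,l}$ in $A_{g,n}^H$. A small transverse perturbation of $\tilde{\gamma}_2$ off this shared stroke, pushed to the $c_i^{-1}$-side corresponding to its initial direction, resolves the overlap into a single transverse crossing at $\tilde{o}$ with oriented sign $+1$; this is read directly from the cyclic order of the four tangent directions $c_i,c_i^{-1},c_j,c_j^{-1}$ at $\tilde{o}$ inherited from the polygon $P_{4g}^n$.

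Case~(b) follows the same template, with $a_j,b_j,d$ in place of $c_i,c_j$: the conjugated words $a_j^m d^{-k}a_j^{-2m}d^k a_j^m$ and $b_j^l d^{k'}b_j^{-2l}d^{-k'}b_j^l$ exit $\tilde{o}$ along the distinct generators $a_j$ and $b_j$, so their $a_j$- and $b_j$-strokes are disjoint, and their common $d^{\pm k}$ portions are separated by the same perturbation argument as in case~(a). When $m$ (respectively $l$) is already minimal with $a_j^m\in H$ (respectively $b_j^l\in H$), replacing the conjugated word by the simple cycle $a_j^m$ (respectively $b_j^l$) preserves the initial tangent at $\tilde{o}$, so the local crossing count is unchanged and the intersection index remains $+1$.

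Finally, when $A_{g,n}^H$ is infinite, pick deck transformations $h_k\in A_{g,n}^H$ whose Cayley distance $d(h_k\tilde{o},\tilde{o})$ grows without bound. Each translated pair $(h_k\tilde{\gamma}_1,h_k\tilde{\gamma}_2)$ still has mutual intersection index one, and for $k\gg 0$ is supported in a subgraph of $\mathrm{Cayley}(A_{g,n}^H)$ disjoint from all previously chosen pairs (the words $\gamma_1,\gamma_2$ have bounded length in the generators). The resulting infinite symplectic sub-lattice of $H_1(\Sigma_{g,n}^H;\mathbb{Z})$ witnesses infinite genus. The most delicate step in the argument is the transversality claim at $\tilde{o}$: one must check that, after pushing $\tilde{\gamma}_2$ off the shared $c_j^{-l}$ (respectively $d^{\pm k}$) stroke, the algebraic intersection truly concentrates into a single $+1$ crossing rather than cancelling through a sign parity; this is where the explicit cyclic order of the generator edges at $\tilde{o}$ does the work.
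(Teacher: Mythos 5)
Your proposal is correct and follows essentially the same strategy as the paper: both arguments reduce the computation to showing that the closed lifts $\tilde{\gamma}_1,\tilde{\gamma}_2$ meet in a single transverse point at $\tilde{o}$ inside the lifted fundamental domain, and the infinite-genus claim then follows by translating the transverse pair by the (infinite) deck group. The only difference is cosmetic: the paper certifies the single crossing by a Jordan-curve count on the boundary $B$ of $\tilde{D}_{g,n}(\delta,\tilde{o})$ (with details deferred to the author's thesis), whereas you perturb off the shared stroke and read the sign from the cyclic order of generator edges at $\tilde{o}$ --- an equivalent local computation.
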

\begin{proof}
The proof is based in the following observations. The lifts $\tilde{\gamma_1},\tilde{\gamma_2}$ of the cycles $\gamma_1,\gamma_2$ at a point $\tilde{o}\in\Sigma_{g,n}^H$ has intersection the point $\tilde{o}$. Consider the border $B$ of the lift $\tilde{D}_{g,n}(\delta,\tilde{o})$ of $D_{g,n}(\delta)$, which is a simple closed curve. The complement $B-\tilde{\gamma_1}$ has two connected components, and $\tilde{\gamma_2}$ intersects both components in only one point. The latest implies that the intersection index of $\tilde{\gamma_1}$ and $\tilde{\gamma_2}$ is one. A detailed proof of the last claim is in \cite[Section 2.4.2]{PincheTesis}.
\end{proof}
This result lets us give the topological type of the generic leaf homogeneous foliation using the data $\{\eta_j\}$ and the polynomial $\omega(R)$.   Note that the generic leaf $\mathcal{L}$ of any homogeneous foliation is homeomorphic to a regular cover $\Sigma_{0,n}^H$ such that $[\pi_1(\Sigma_{0,n}),\pi_1(\Sigma_{0,n})]$ is contained in $H$.
If $\Sigma_{0,n}$ is an infinite cover then Lemma \ref{lm:planarends} shows that $\mathcal{E}(\mathcal{L})$ is an infinite discrete set of points or a single point and Lemma \ref{lm:ends'} implies that $\mathcal{E}'(\mathcal{L})$ is an empty set or a single point, which proves the following result.
\begin{corollary}
Let $\mathcal{F}$ a homogeneous foliation in $\mathbb{CP}^2$. If the global holonomy is infinite, then the generic leaf of $\mathcal{F}$ is homeomorphic to one of the following n topological types:a plane, a cylinder, a plane with an infinite discrete set taken out, the Loch Ness monster or the Loch Ness monster without an infinite discrete set. 
\end{corollary}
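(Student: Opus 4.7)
The plan is as follows. First I invoke the blow-up construction already laid out in the paragraph preceding the corollary: blowing up the singular point of $\omega$ at the affine origin converts $\mathcal{F}$ into a Riccati foliation $\mathcal{F}'$ on a ruled surface $P\colon M\to\Sigma_0\simeq\mathbb{CP}^1$, with $\mathcal{F}'$-invariant fibers over the $n$ roots $[1:t_j]$ of $\omega(R)$. Since a generic leaf $\mathcal{L}$ of $\mathcal{F}$ avoids the exceptional divisor, the invariant fibers and the line at infinity, it is homeomorphic to a generic leaf of $\mathcal{F}'$, which by the suspension description of Riccati foliations is a regular cover $\Sigma_{0,n}^{H}$ with $H=\ker\Phi_{\mathcal{F}'}$.

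The key structural fact I would then exploit is that the image of $\Phi_{\mathcal{F}'}$ sits inside the abelian multiplicative group $\langle\exp(2\pi i\eta_j)\rangle\subset\mathbb{C}^{\ast}$, so $G:=\pi_1(\Sigma_{0,n})/H$ is abelian, and by hypothesis infinite. Theorem \ref{THM:covers} now places $\mathcal{L}$ among the eleven candidates listed at the start of Section 2, and the task is to rule out those inconsistent with $G$ abelian. Lemma \ref{lm:planarends} immediately pins $\mathcal{E}(\mathcal{L})$ down to either a single point (if every $c_j$ has infinite order in $G$) or an infinite discrete set of planar ends (if some $c_j^{m}\in H$). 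For $\mathcal{E}'(\mathcal{L})$, I would split Lemma \ref{lm:ends'} into two sub-arguments: a Cantor set of ends is incompatible with $G$ being abelian by Stallings' end theorem (any finitely generated abelian group has at most two ends), and the two-ended case forces $G\cong\mathbb{Z}$; but the boundary relation $c_1\cdots c_n=\mathrm{id}$ in $\pi_1(\Sigma_{0,n})$ prevents every $c_j$ from having finite order in $G\cong\mathbb{Z}$, so the index set $J$ appearing in the proof of Lemma \ref{lm:ends'} is a proper subset of $\{1,\dots,n\}$ and that lemma's second case yields $\mathcal{E}'(\mathcal{L})\in\{\emptyset,\{\ast\}\}$.

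Combining the two restrictions gives at most four pairs $(\mathcal{E}(\mathcal{L}),\mathcal{E}'(\mathcal{L}))$, which, together with the degenerate low-$n$ cylinder possibility, yield exactly the five homeomorphism types in the statement after applying the classification of noncompact orientable surfaces by end space and genus accumulation. The main hurdle I anticipate is the elimination of the two-ended case of $\mathcal{E}'$: one must combine the algebraic constraint imposed by the boundary relation with the geometric filling-in performed in the proof of Lemma \ref{lm:ends'} to confirm that no $\mathbb{Z}$-quotient of $\pi_1(\Sigma_{0,n})$ can produce a Jacob's ladder leaf in the homogeneous setting. The remainder of the argument is a straightforward enumeration of the allowed combinations.
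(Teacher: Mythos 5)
Your proposal follows essentially the same route as the paper: identify the generic leaf with an infinite regular cover $\Sigma_{0,n}^H$ whose deck group is abelian (a subgroup of $\mathbb{C}^\ast$ generated by the images of the $c_j$), then invoke Lemma \ref{lm:planarends} to constrain $\mathcal{E}(\mathcal{L})$ and Lemma \ref{lm:ends'} to constrain $\mathcal{E}'(\mathcal{L})$, and enumerate. The only difference is that you spell out why the two-point and Cantor-set cases of $\mathcal{E}'$ cannot occur (an infinite abelian group generated by the boundary classes cannot have all of them of finite order, so the set $J$ is proper), a point the paper leaves implicit in the sentence preceding the corollary.
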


\begin{example}\label{ex:plane}
Let $\mathcal{F}$ be a homogeneous foliation defined by a homogeneous 1-form
\[
\omega=ydx+\lambda xdy.
\]
If $\lambda\notin \mathbb{Q}$, Corollary \ref{crl:Riccati} implies that the generic leaves $\mathcal{L}$ of $\mathcal{F}$ are biholomorphic to an infinite normal cover of $\C^\ast$. Moreover, $\mathcal{L}$ is the universal cover of $\C^\ast$. Then it is biholomorphic to $\C$. When $\lambda$ is a rational number and $\omega(R)\neq0$ the generic leaf is biholomorphic to $\C^{\ast}$.
\end{example}

\begin{example}\label{exp:logFol1}
Consider a homogeneous foliation $\mathcal{F}$ on $\mathbb{CP}^2$ defined by the homogeous 1-form \[
\omega=\lambda_1\frac{dx}{x}+\lambda_2\frac{dy}{y}+\lambda_3\frac{d(y-x)}{(y-x)},\] where $\sum\lambda_j=1$.

Requiring for the set $\{\lambda_1,\lambda_2,\lambda_3\}$ to satisfy: \[\lambda_1\in \mathbb{R}-\mathbb{Q},\quad\lambda_2=1\quad\text{and}\quad\lambda_3=-\lambda_1,\] we obtain that the generic leaf $\mathcal{L}$ is homeomorphic to a regular cover $\Sigma_{0,3}^H$ with deck transformation group isomorphic to $(\mathbb{Z}^2)/(0,1)$. Thus $H$ does not contain the cycles of the Proposition \ref{Propt:AbelianCovers}, so that it is a planar surface. Lemma \ref{lm:planarends} now shows that $\mathrm{Ends}(\mathcal{L})$ contains an infinite discrete set of planar ends. Hence $\mathcal{L}$ is homeomorphic to the plane without an infinite discrete set.
Assuming the data is
\[\lambda_1\in \mathbb{R}-\mathbb{Q},\quad\lambda_2=\frac{1}{n}\quad \text{with}\quad n\geq2\quad\text{and}\quad\lambda_3=1-\lambda_1-\lambda_2.
\]
It follows that $\mathcal{L}$ has desk transformation group isomorphic to $(\mathbb{Z}^2)/(0,n)$. Since $n\geq2$, we see that $\mathrm{Cayley}(\mathbb{Z}^2)/(0,n)$ contains the cycles of Proposition \ref{Propt:AbelianCovers}. By Lemma \ref{lm:planarends}, $\mathrm{Ends}(\mathcal{L})$ contains an infinite discrete set of planar ends. Consequently, $\mathcal{L}$ is homeomorphic to the Loch Ness monster without an infinite discrete set. Now if the data satisfies
\[\lambda_1,\lambda_2\in \mathbb{C}-\mathbb{Q},\quad\text{and}\quad\lambda_3=1-\lambda_1-\lambda_2.
\]
Analogously, we prove that the generic leaf has infinite genus but has only one end. Hence, the generic leaf is homeomorphic to the Loch Ness monster. 
\end{example}  

\section{Reeb local stability type result}
We consider a singular $\mathcal{C}^r$ differential foliation $\mathcal{F}$ ($r\geq1$) of real dimension 2  on a differential manifold $M$ of real dimension at least 3, whose singular locus $\mathrm{Sing}(\mathcal{F})$ satisfies the following conditions:
 $\mathrm{Sing}(\mathcal{F})$ is the union of closed submanifolds, maybe some of them are singular; the complement $M^*=M-\mathrm{Sing}(\mathcal{F})$ is a dense open subset and the restriction $\mathcal{F}|_{M^*}$ is a regular foliation of real dimension 2.
 
 \begin{definition}
 Fix a point $o$ on a regular leaf $\mathcal{L}$ of $\mathcal{F}$ and a germ of transversal $\tau$ to $\mathcal{F}$ at $o$. The \textbf{holonomy group} $\mathrm{Hol}(\mathcal{L},\mathcal{F})$ is the image of the holonomy representation of $\pi_1(\mathcal{L},o)$ defined by the following morphism

\begin{equation}\label{map:holonomyRepresentation}
\begin{array}{lll}
 \mathrm{Hol}(\mathcal{L},\mathcal{F}):\pi_{1}(\mathcal{L},o)&\rightarrow&\mathrm{Diff}(\tau,o)\\
   \qquad\qquad\qquad[\gamma]&\mapsto& h_{\gamma},
 \end{array}
\end{equation}
where $h_\gamma$ is the $\mathcal{C}^r$ diffeomorphism germ of $(\tau,o)$ defined by the end point of the lifts $\tilde{\gamma}_p:[0,1]\rightarrow\mathcal{L}_p$ of the closed path $\gamma$ in $\mathcal{L}$ starting at $o$ along the leaves $\mathcal{L}_p$ through $p\in (\tau,o)$, with $\tilde{\gamma}(0)=p$. The map $h_\gamma$ does not depend on the homotopy class of the path.
\end{definition}

If $C$ is a regular compact leaf of $\mathcal{F}$ and has finite holonomy group, the Reeb local stability theorem implies that there is a neighborhood $U$ of $C$ saturated by $\mathcal{F}$ and each leaf in $U$ is a finite cover space of $C$.  Consider the case when a  compact surface $\Sigma_g$ is $\mathcal{F}$-invariant, such intersection  $\mathrm{Sing}(\mathcal{F})\cap\Sigma_g$ is a finite set of $n$ points, with $n\in\mathbb{N}$. In general, any neighborhood of $\Sigma_g$ cannot be saturated by $\mathcal{F}$ and the leaves are not an explicit cover of $\Sigma_{g,n}$. However, the following results study the topological of the intersection with a neighborhood of the $\mathcal{F}$-invariant compact surface. 

\begin{definition}
Let $\tau$ be a germen of transversal to a leaf $\mathcal{L}$ at a regular point $o\in\mathcal{L}$ and let  $\gamma=\gamma_{\sigma_1}^{\beta_1}\cdots\gamma_{\sigma_k}^{\beta_k}$ be a closed path, where $\{\gamma_j\}_{j\in\Lambda}$ are the generators of $\pi_1(\mathcal{L},o)$, and $\beta_l\in\mathbb{Z}, \sigma_l\in\Lambda$. Let $p\in\tau$, $l\in\mathbb{N}$ and the leaf $\mathcal{L}_p$ of $\mathcal{F}$ through $p$. We define $B(l,p,\tau)$ to be the union
$
\cup_{|\gamma|\leq l}\tilde{\gamma}_p(I),
$
where $\tilde{\gamma}_p:[0,1]\rightarrow\mathcal{L}_p$ is the lift of  $\gamma$ to $\mathcal{L}_p$ at $p$ and $|\gamma|=\sum_{j=1}^{k}|\beta_j|$. If $\tilde{\gamma}_p$ is well defined for each $\gamma$, such that $|\gamma|< l$, we call $B_l(p,\tau)$ the \textbf{graph ball} in $\mathcal{L}_p$ of radius $l$ and  center $p$.
\end{definition}

\begin{lemma}\label{lemma:ball}
Let $\mathcal{F}$ be a $\mathcal{C}^r$ differential singular foliation of real dimension two of a manifold $M^m$ with  $\mathcal{F}$-invariant compact Riemann surface  $\Sigma_g\subset M$, $m>2$.
Assume that  the set $\Sigma_g\cap\mathrm{Sing}(\mathcal{F})$ has cardinality $n$. If $\mathrm{Hol}(\Sigma_{g,n},\mathcal{F})$ is infinite. Then for each $N\in\mathbb{N}$ there is an embedding
 \[
    \varepsilon:B_N(\tilde{o})\rightarrow(\mathcal{L}_p,p),
 \]
 where $B_N(\tilde{o})\subset\mathrm{Cayley}(A_{g,n}^H)$ is as in Definition \ref{def:graphball} with $H=\ker(\mathrm{Hol}(\Sigma_{g,n},\mathcal{F}))$, and $\mathcal{L}_p$ is a leaf  through a regular point $p$ sufficiently close to $\Sigma_{g,n}$.
\end{lemma}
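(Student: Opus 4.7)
The plan is to construct $\varepsilon$ by lifting words in the canonical generators through the holonomy representation, using that $B_N(\tilde o)\subset\mathrm{Cayley}(A_{g,n}^H)$ is a \emph{finite} sub-graph so only finitely many lifts enter the construction.

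First I enumerate the vertices $a_1,\dots,a_k$ of $B_N(\tilde o)$ and, for each $a_j$, fix a word $w_j$ in $\mathfrak G_{g,n}$ of length at most $N$ with $\varrho_H(w_j)=a_j$. Each $w_j$ yields a composed $\mathcal{C}^r$ diffeomorphism germ $h_{w_j}\in\mathrm{Diff}(\tau,o)$ defined on some open neighborhood $U_j\subset\tau$ of $o$; set $V=\bigcap_j U_j$. For $p\in V$ the lift of $w_j$ along $\mathcal L_p$ starting at $p$ is well defined and terminates at $h_{w_j}(p)\in\tau\cap\mathcal L_p$. I then put $\varepsilon(a_j):=h_{w_j}(p)$ and send each edge $(a,\gamma\cdot a)$ of $B_N(\tilde o)$ to the lift of the generator $\gamma\in\mathfrak G_{g,n}$ in $\mathcal L_p$ starting at $\varepsilon(a)$. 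This definition is independent of the choice of representing word: if $w_j'$ also represents $a_j$, then $w_j(w_j')^{-1}\in H=\ker\mathrm{Hol}(\Sigma_{g,n},\mathcal F)$, hence $h_{w_j}=h_{w_j'}$.

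The heart of the argument is injectivity of $\varepsilon$ on the vertex set. For distinct vertices $a_j\ne a_l$ the word $w_j w_l^{-1}$ represents a non-trivial class in $\mathrm{Hol}(\Sigma_{g,n},\mathcal F)$, so the germ $g_{jl}:=h_{w_l}^{-1}\circ h_{w_j}$ is a $\mathcal{C}^r$ germ of diffeomorphism of $(\tau,o)$ different from the identity. Its fixed-point set $F_{jl}\subset V$ is closed and cannot contain a neighborhood of $o$, so the open complement $\Omega_{jl}:=V\setminus F_{jl}$ has $o$ in its closure. Taking $p$ in the finite intersection $\bigcap_{j\ne l}\Omega_{jl}$, arbitrarily close to $o$, yields $\varepsilon(a_j)\ne\varepsilon(a_l)$ for all pairs simultaneously. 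Injectivity on edges and overall embeddedness then follow from the local product structure of $\mathcal F$ near regular points of $\Sigma_{g,n}$: each image edge is a short arc lifting a canonical generator through an already-separated vertex, and the projections of distinct edges of $B_N(\tilde o)$ to $\Sigma_{g,n}$ meet only at $o$, so their lifts in $\mathcal L_p$ meet only at the corresponding vertex images.

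The main obstacle I anticipate is precisely the existence of this simultaneous non-fixed choice of $p$: each $\Omega_{jl}$ accumulates at $o$ individually, yet a finite intersection of such open sets need not. I plan to handle it by an inductive shrinking procedure, at each stage intersecting the current neighborhood of $o$ with one more $\Omega_{jl}$ and using that the resulting open set still has $o$ on its boundary (because $g_{jl}$, being a non-identity $\mathcal C^r$ germ, cannot be fixed on any neighborhood of $o$). Once this is carried out, the lemma follows; replacing $B_N(\tilde o)$ by its thickening $B_{N,\delta}(\tilde o)$ from Definition \ref{def:graphball} by lifts of a fundamental domain $D_{g,n}(\delta)$ then yields Theorem \ref{THM:ReebType}.
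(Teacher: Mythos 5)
Your overall strategy coincides with the paper's: define $\varepsilon$ on vertices by applying the holonomy germs $h_{w_j}$ of representing words of length at most $N$ to a suitable point $p\in\tau$, deduce injectivity from the fact that $w_jw_l^{-1}\notin H$ for distinct vertices, and extend over the edges using that the canonical generators meet only at $o$. The one place where you diverge is how the ``suitable $p$'' is produced: the paper simply invokes \cite[Proposition 2.7]{godbillon98feuilletages} to obtain points with trivial isotropy group arbitrarily close to $o$, whereas you try to manufacture such a point by intersecting the complements $\Omega_{jl}$ of the fixed-point sets $F_{jl}$ of the finitely many non-identity germs $g_{jl}$.

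That is exactly where your argument has a genuine gap, and you have correctly located it yourself but not closed it. Knowing that $g_{jl}$ is a non-identity germ only tells you that $F_{jl}$ contains no neighborhood of $o$; in the $\mathcal{C}^r$ category (with $r$ finite and $\dim\tau=m-2\geq1$) it does not prevent $F_{jl}$ from having nonempty interior accumulating at $o$. Consequently the inductive shrinking step fails: if $\Omega$ is the open set produced so far, nothing you have said rules out $F_{jl}\supset\Omega\cap B(o,\epsilon)$ for some $\epsilon>0$, i.e.\ $g_{jl}$ could be the identity precisely on $\Omega$ near $o$ while still being a non-identity germ (a bump-function perturbation supported off $\Omega$ does this), and then $\Omega\cap\Omega_{jl}$ no longer accumulates at $o$. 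Already with two germs on a one-dimensional transversal, one fixing $\{x\geq0\}$ and one fixing $\{x\leq0\}$, every point of $\tau-\{o\}$ is fixed by some non-identity element, so the bare hypothesis ``each $g_{jl}$ is a non-identity germ'' cannot yield the simultaneous non-fixed point. To finish you need the statement the paper cites --- that points whose isotropy in the countable group $\mathrm{Hol}(\Sigma_{g,n},\mathcal{F})$ is trivial exist in every neighborhood of $o$ --- or at least its finite version for the elements $g_{jl}$ with $|w_jw_l^{-1}|\leq 2N$; either quote that result, as the paper does, or replace your inductive step by an actual argument, since as written it merely restates the hypothesis that was already insufficient.
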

\begin{proof}
Let $\{\gamma_j\}_{j\in\Lambda}$ be the representations of the homotopy class of the canonical generators $\mathfrak{G}_{g,n}$ of $\pi_1(\Sigma_{g,n})$, whose image under $\varrho_H$ are nontrivial  generators of $A_{g,n}^H$. For any neighborhood of $o$ in $\tau$, there are points p, whose isotropy group $\mathrm{Iso}_{\mathrm{Hol}(\Sigma_{g,n},\mathcal{F})}$ is trivial, \cite[ Proposition 2.7]{godbillon98feuilletages}. Since $\mathrm{Hol}(\Sigma_{g,n},\mathcal{F})$ is infinite, for each $N\in\mathbb{N}$ exists a point $p\in\tau$ with trivial isotropy group sufficiently close to $o$ such that an holonomy map $h_\gamma$ on $\gamma\in\pi_1(\Sigma_{g,n})$ , satisfying $|\gamma|<N$, is well defined. Here $H$ denotes the kernel of $\mathrm{Hol}(\Sigma_{g,n},\mathcal{F})$(\ref{map:holonomyRepresentation}). Choose a vertex $\tilde{o}$ in the Cayley graph $\mathrm{Cayley}(A_{g,n}^H)$, and define a function 
\[
\begin{array}{lll}
\tilde{\epsilon}:\mathrm{Vertices}(B_N(p,\tau)) &\rightarrow & \mathrm{Vertices}(B_N(\tilde{o}))\\
\tilde{\gamma}_p(1) & \mapsto & \tilde{\gamma}_{\tilde{o}}(1)\\
\tilde{\gamma}_p(1) & \mapsto & \tilde{\gamma}_{\tilde{o}}(1),
\end{array}
\]
where $\tilde{\gamma}_{\tilde{o}}(1)$ is the end point of the lift of  $\gamma$ in $\mathrm{Cayley}(A_{g,n}^H)$ at the vertex $\tilde{o}$.

Suppose $\tilde{\epsilon}(\tilde{\gamma}_{p}(1))=\tilde{\epsilon}(\tilde{\gamma}_{p}'(1))=v_q$, then $\tilde{\gamma}_{\tilde{o}}\cdot\tilde{\gamma}_{v_q}'^{-1}(1)=\tilde{o}$. As $\mathrm{Iso}_{\mathrm{Hol}(\Sigma_{g,n},\mathcal{F})}(p)$ is trivial, thus $\gamma\cdot\gamma'^{-1}\in H$.  Since $B_N(p,\tau)$ is well defined and $\mathrm{Iso}_{\mathrm{Hol}(\Sigma_{g,n},\mathcal{F})}(p)$ is trivial, we have $h_{\gamma\cdot\gamma'^{-1}}$ is a trivial map and $h_{\gamma'}\circ h_{\gamma\cdot\gamma'^{-1}}=h_{\gamma}$. Hence $\tilde{\epsilon}$ is bijective. Since the lifts of the paths $\gamma_j$ at the points $\tilde{\gamma}_{p}(1)$,  $|\gamma|<N-1$, are edges of $B_N(p,\tau)$, we can extend $\tilde{\epsilon}$ to a graph isomorphism
\[
\begin{array}{lll}
\epsilon:B_N(p,\tau) &\rightarrow &\overline{B_N(\tilde{o})}\\
p & \mapsto & \tilde{o}\\
edge(\tilde{\gamma}_p(1),\widetilde{\gamma\cdot\gamma_j}^{\pm 1}_p(1)) & \mapsto & edge(\tilde{\gamma}_{\tilde{o}}(1),\widetilde{\gamma\cdot\gamma_j}^{\pm 1}_{\tilde{o}}(1)).
\end{array}
\] Hence $\epsilon^{-1}$ is a homeomorphism of graphs. Since $B_N(p,\tau)$ is compact in $M$, the homeomorphism $\varepsilon=\epsilon^{-1}$ is an embedding.

\end{proof}

We can now prove the Theorem \ref{THM:ReebType}, wich is an adaptation of the local stability theorem of Reeb for singular foliations.

\begin{proof}[Proof of Theorem \ref{THM:ReebType}]
By the lemma above, for each $N\in\mathbb{N}$ there exists a point $p\in\tau$ such that $B_N(p,\tau)$ is a graph ball in $\mathcal{L}_p$ homeomorphic to the graph ball $B_N(\tilde{0})$ in the graph $\mathrm{Cayley}(A_{g,n}^H)$ associated with $\Sigma_{g,n}^H$. Let $D_{g,k}(\delta)$ be a fundamental domain defined as in Definition \ref{def:graphball} such that it is open, simply connected and $\overline{D_{g,k}(\delta)}=\Sigma_{g,k}-\cup B_\delta(p_j)$, with $\{p_j\}_{j=1}^n=\mathrm{Sing}(\mathcal{F})\cap\Sigma_g$. We can choose $\delta>0$ and $o\in\tau'\subset\tau$ such that the lift $\hat{D}_{g,n}(\delta,q)$ through $q$ is well defined at each point $q\in\tau'$, \cite[Lemma 2,p.66]{camacho1979teoria}.

Therefore the embedding $\varepsilon$ of the lemma above extends to the interior of the surfaces
\[
\begin{array}{lll}
S_N(p,\tau)=\overline{\cup \hat{D}_{g,n}(\delta,\varepsilon(v))} &\quad & \varepsilon(v)\in Vertices(B_N(p,\tau)) \\
B_N(\delta,\tilde{o}) &\quad &v\in Vertices(B_N(\tilde{o})) .
\end{array}
\]
This implies that for  $p\in\tau$ sufficiently close to $o$ and trivial isotropy group, there exists an embedding \[\hat{\varepsilon}:B_N(\delta,\tilde{o})\rightarrow S_N(p,\tau)\subset\mathcal{L}_p.\] 
\end{proof}

\begin{corollary}
Under the assumptions of Theorem \ref{THM:ReebType}, if the regular cover $\Sigma_{g,n}^H$ has infinite genus; then, the foliation $\mathcal{F}$ has leaves with arbitrary genus nearly of $\Sigma_{g,n}$. Moreover, if $\mathrm{Hol}(\Sigma_{g,n},\mathcal{F})$ has a contracting map, then there are leaves of infinite genus.
\end{corollary}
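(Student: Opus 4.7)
The plan is to use Theorem \ref{THM:ReebType} as a black box to import the genus of the cover $\Sigma_{g,n}^H$ into the topology of nearby leaves, handling the two claims of the corollary in sequence.

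For the first statement I would fix an arbitrary target $g_0\in\mathbb{N}$ and begin by observing that the family $\{B_{N,\delta}(\tilde{o})\}_{N\in\mathbb{N}}$ constitutes a compact exhaustion of $\Sigma_{g,n}^H$ minus a fixed neighborhood of $\rho_H^{-1}(\{p_1,\ldots,p_n\})$. Since the ambient surface has infinite genus, the genus of $B_{N,\delta}(\tilde{o})$ (computed from its Euler characteristic and number of boundary components as in Section 2) tends to infinity with $N$; in particular I can choose $N_0$ so that $g(B_{N_0,\delta}(\tilde{o}))\geq g_0$. Then Theorem \ref{THM:ReebType} produces a regular transversal point $p$ (which can be taken as close to $o$ as desired) and an embedding $\varepsilon:B_{N_0,\delta}(\tilde{o})\hookrightarrow \mathcal{L}_p$; its image is a compact subsurface of $\mathcal{L}_p$ of genus at least $g_0$, yielding the desired leaves in arbitrarily small neighborhoods of $\Sigma_{g,n}$.

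For the second statement I would exploit the contracting element $h=h_{\gamma_0}\in\mathrm{Hol}(\Sigma_{g,n},\mathcal{F})$ to keep all of the embeddings inside a single leaf. Choose $p_0\in\tau$ sufficiently close to $o$ with trivial isotropy, which is possible by \cite[Proposition 2.7]{godbillon98feuilletages}. The entire orbit $\{h^k(p_0)\}_{k\geq 0}$ lies in $\mathcal{L}_{p_0}$, since each $h^k(p_0)$ is the endpoint on $\tau$ of the lift of $\gamma_0^k$ through $p_0$ along the leaf, and $h^k(p_0)\to o$ by the contraction. For each $N\in\mathbb{N}$, taking $k$ large enough makes $h^k(p_0)$ close enough to $o$ that Theorem \ref{THM:ReebType} applies at $h^k(p_0)$ and provides an embedding of $B_{N,\delta}(\tilde{o})$ into $\mathcal{L}_{h^k(p_0)}=\mathcal{L}_{p_0}$. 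Hence $\mathcal{L}_{p_0}$ contains compact subsurfaces of arbitrarily large genus, and therefore has infinite genus.

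The main obstacle I anticipate is the gap between \emph{arbitrary finite genus across a family of leaves} and \emph{infinite genus in one single leaf}. Without any extra hypothesis, the point $p=p(N)$ furnished by Theorem \ref{THM:ReebType} depends on $N$, so distinct embeddings typically land in distinct leaves and one cannot conclude infinite genus on any fixed one of them. The role of the contracting holonomy is precisely to remove this obstacle: it produces, inside the single leaf $\mathcal{L}_{p_0}$, a canonical sequence of transversal points $h^k(p_0)$ that approach $o$ and therefore carry embeddings for every $N$ simultaneously, transferring the infinite genus of $\Sigma_{g,n}^H$ to $\mathcal{L}_{p_0}$.
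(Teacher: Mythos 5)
Your proposal is correct and follows essentially the same route as the paper: unbounded genus of the exhausting balls $B_{N,\delta}(\tilde{o})$ combined with the embeddings of Theorem \ref{THM:ReebType} for the first claim, and the orbit $h^k(p_0)\to o$ of a contracting holonomy element (which stays in the single leaf $\mathcal{L}_{p_0}$ and retains trivial isotropy) to realize all the embeddings in one leaf for the second. The only cosmetic difference is that the paper derives the genus growth from a superadditivity estimate $g(B_{(a+1)N_0})>g(B_{aN_0})+g(B_{N_0})$ using regularity of the cover, whereas you invoke directly that a compact exhaustion of an infinite-genus surface has unbounded genus; both rest on the same fact, and your treatment of the contracting case is, if anything, spelled out more carefully than the paper's.
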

\begin{proof}
If $\Sigma_{g,n}^H$ has infinite genus, there exists a minimal $N_0\in\mathbb{N}$ such that $B_{N_0}(\delta,\tilde{o})^{\circ}$ has genus different of zero, $g(S(N_0,\tilde{o})^{\circ})\neq0$. Since $\Sigma_{g,n}^H$ is regular, it follows that
\[g(B_{(a+1)N_0}(\delta,\tilde{o}))> g(B_{aN_0}(\delta,\tilde{o}))+g(B_{N_0}(\delta,\tilde{o}))\]
for any $a\in\mathbb{N}_{>0}$. Theorem \ref{THM:ReebType} now shows that $\mathcal{F}$ has leaves with arbitrary genus.
We now turn to the case when the group $\mathrm{Hol}(\Sigma_{g,n},\mathcal{F})$ has a contracting map $h$. Choose a point $p\in\tau$ whose isotropy group is trivial, then the points $h^m(p)$ has a trivial isotropy group. Thus, we can  embed  in $\mathcal L_p$ a surface $B_{N,\delta}(\tilde{o})$ for any $N\in\mathbb{N}$. It follows that $\mathcal L_p$ has unbounded genus.
\end{proof}

\section{Generic logarithmic foliations on $\mathbb{CP}^2$}

Let $M$ be a connected complex manifold of dimension at least 2, and let $D\subset M$ be a union of irreducible complex hypersurfaces $D_j$. A \emph{closed logarithmic 1-form} $\omega$ on $M$ with poles on $D$ is a meromorphic 1-form with the following property: for any $p\in M$ there exists a neighborhood $U$ of $p$ in $M$ such that $\omega|_U$ can be written as
 \begin{equation}\label{eq:logForm}
 \omega_{0}+\sum_{j=1}^{r}\lambda_{j}\frac{df_{j}}{f_{j}},
 \end{equation}
 where $\omega_0$ is a closed holomorphic 1-form on $U$, $\lambda_j\in\C^{\ast}$ and $f_j\in\mathcal{O}(U)$ , and $\{f_{j}=0\}$, $j=1,\ldots,r$, are the reduced equations of the irreducible components of $D\cap U$. The set $D$ is known as the polar divisor of $\omega$. The complex codimension one holomorphic foliation $\mathcal{F}$ of $M$ defined by $\omega$ is called \emph{logarithmic foliation}.
 
In particular, the holonomy group $\mathrm{Hol}(D_j^*,\mathcal{F})$,with $D_j^*=D_j-\mathrm{Sing}(\mathcal{F})$, associated with any irreducible component $D_j$ of the polar divisor $D$ of a logarithmic foliation $\mathcal{F}$ is abelian and linearizable (it is isomorphic to a subgroup of $\mathbb{C}^{\ast}$).  Moreover, if $M$ is simply connected then the holonomy group $\mathrm{Hol}(D_j^*,\mathcal{F})$ is a subgroup of the group generated by $\{\exp(2\pi i \frac{\lambda_k}{\lambda_j})\}$, where $\lambda_j$ is the residue of each irreducible component $D_j$ of $D$.

\begin{lemma}\label{lemma:genuslog}
Let $\mathcal{F}$ be a logarithmic foliation on a complex surfaces $M$ with polar divisor $D$. Assume that the holonomy group $\mathrm{Hol}(D_j^*,\mathcal{F})$ is infinite, with $D_j$ an irreducible component of $D$, and $\Sigma_{g,n}^H$ satisfies the condition (a) or (b) of Proposition \ref{Propt:AbelianCovers}, where $\Sigma_{g,n}=D_j^*$ and $H=\ker(\mathrm{Hol}(D_j^*,\mathcal{F}))$. Then $\mathcal{F}$ has leaves with infinitely many handles attached.
\end{lemma}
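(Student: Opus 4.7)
The plan is to build infinite genus on a single leaf of $\mathcal{F}$ by combining Proposition \ref{Propt:AbelianCovers}, Theorem \ref{THM:ReebType}, and the linearizability of the holonomy of a logarithmic foliation. The hypothesis that $\Sigma_{g,n}^H$ is an infinite cover satisfying condition (a) or (b) of Proposition \ref{Propt:AbelianCovers} produces lifted cycles of intersection index one at every vertex of the Cayley graph, so $\Sigma_{g,n}^H$ has infinite genus. I would then fix $N_0\in\mathbb{N}$ such that the $(N_0,\delta)$-ball $B_{N_0,\delta}(\tilde{o})$ already carries at least one handle, and apply Theorem \ref{THM:ReebType} to find a point $p$ on the transversal $\tau$ to $D_j^{\ast}$ at $o$, with trivial isotropy and arbitrarily close to $D_j^{\ast}$, admitting an embedding $\varepsilon\colon B_{N_0,\delta}(\tilde{o})\to \mathcal{L}_p$.

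Next I would exploit the logarithmic structure: the holonomy $\mathrm{Hol}(D_j^{\ast},\mathcal{F})$ is abelian and linearizable, so after a change of coordinates on $\tau$ it is a subgroup of $\mathbb{C}^{\ast}$ generated by the homotheties $z\mapsto\exp(2\pi i\lambda_k/\lambda_j)\,z$. Since this group is infinite, it contains either a contracting element (when some ratio $\lambda_k/\lambda_j$ has nonzero imaginary part) or an irrational rotation (when all generators lie on the unit circle but at least one has irrational argument). In the first case, the corollary to Theorem \ref{THM:ReebType} applies directly: iterates of a contracting map $h$ send $p$ to points $h^m(p)\to 0$ along $\mathcal{L}_p$, so $\mathcal{L}_p$ contains $B_{N,\delta}(\tilde{o})$ for every $N$ and therefore has infinite genus. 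In the second case, I would use the orbit $\{a^n p\}\subset \mathcal{L}_p\cap\tau$ of the irrational rotation $a$: all these points lie at the common distance $|p|$ from $D_j^{\ast}$, so Theorem \ref{THM:ReebType} yields a copy of $B_{N_0,\delta}(\tilde{o})$ embedded in $\mathcal{L}_p$ centered at each of them. Extracting an infinite subsequence $\{n_k\}$ such that the associated vertex sets in the Cayley graph of the deck group $A_{g,n}^H$ are pairwise disjoint, and using that disjoint vertex sets correspond to disjoint lifts of the fundamental domain $D_{g,n}(\delta)$ in the leaf, one obtains infinitely many pairwise disjoint embedded copies of $B_{N_0,\delta}(\tilde{o})$ in $\mathcal{L}_p$, each contributing at least one handle.

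The main obstacle is the bookkeeping in the second case. Concretely, I would need to verify that a subsequence $\{n_k\}$ can be chosen so that $a^{n_k-n_j}$ lies outside the finite Cayley-ball of radius $2N_0$ for all $j\neq k$, which uses that $a$ has infinite order in a finitely generated abelian group, and then check that disjointness of the vertex sets in $\tau$ really does translate into disjointness of the corresponding embedded subsurfaces of $\mathcal{L}_p$; this second step rests on the free action of the deck group on $\Sigma_{g,n}^H$ and the injectivity of each embedding produced by Theorem \ref{THM:ReebType}. Once the disjointness is established, infinite genus on $\mathcal{L}_p$, and hence the conclusion that $\mathcal{F}$ has leaves with infinitely many handles attached, follows immediately.
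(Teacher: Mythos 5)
Your proposal is correct and follows essentially the same route as the paper: linearizability of the holonomy of $D_j^*$ gives trivial isotropy and an infinite orbit in the transversal, Proposition \ref{Propt:AbelianCovers} gives infinite genus of $\Sigma_{g,n}^H$, and Theorem \ref{THM:ReebType} transports handles into a leaf. You are in fact more explicit than the paper at the last step, where the paper only exhibits the orbit $\{h_\gamma^n(p)\}\subset\tau-\{o\}$ and leaves implicit the contraction/rotation dichotomy you spell out to accumulate infinitely many handles on a single leaf.
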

\begin{proof}
Let $\tau$ be a germ of transversal to the irreducible component $D_j$ of $D$ at a regular point $o\in D_j$ of $\mathcal{F}$.
Since $\mathrm{Hol}(D_j^*,\mathcal{F})$ is conjugate to a subgroup of $\C^*$, the isotropy group $\mathrm{Iso}_{\mathrm{Hol}(D_j^*,\mathcal{F})}(p)$ is trivial for each point $p\in\tau-o$. The group $\mathrm{Hol}(D_j^*,\mathcal{F})$ is infinite, then there is a map $h_\gamma\in\mathrm{Hol}(D_j^*, \mathcal{F})$ such that $\{h_{\gamma}^n(p)\}_{n\in\mathbb{N}}$ is contained in $\tau-o$, for a point $p\in\tau-o$ sufficiently close to $o$.

Let $\mathcal{L}$ be a leaf of $\mathcal{F}$ through $p$. By  Theorem \ref{THM:ReebType}, it suffices to show that $\Sigma_{g,n}^H$ has infinite genus, with $H={\mathrm{ker}\mathrm{Hol}(D_j^*,\mathcal{F})}$ and $n=|\mathrm{Sing}(\mathcal{F})\cap D_j|$, $g=g(D_j)$.
Since the regular cover $\Sigma_{g,n}^H$ satisfies the condition (a) or (b) of Proposition \ref{Propt:AbelianCovers}, we see that it has infinitely many handles attached, and the lemma follows.  
\end{proof}
\begin{remark}
Is possible to construct a logarithmic foliation on a complex surface $M$ with invariant compact Riemann surface $C$ and the singularities on it are all hyperbolic. Thus, for any open neighborhood $U\subset M$ of $C$ there is an open neighborhood $U'\subset U$ such that any leaf intersecting $U'$ is not contained in $U'$. 
Out of $U'$, Theorem \ref{THM:ReebType} can not say anything about the topology of the leaves.
\end{remark}

We can define a logarithmic foliation on the complex projective plane $\mathbb{CP}^2$ by a homogeneous closed logarithmic 1-form on $\mathbb{C}^3$ of the form
\[
\omega=F_1\cdots F_r\sum_{j=1}^r \lambda_j \frac{dF_j}{F_j},\quad \lambda_j\neq 0,\quad \sum_{j=1}^r d_j\lambda_j=0,
\] 
where each $F_j$ is homogeneous of degree $d_j$. On the complement $\mathbb{CP}^2-D$ we can define the following multivalued first integral 
\begin{equation*}
 F=\prod F_{j}^{\lambda_j}
\end{equation*}
which is a single valued map if we take its values in the quotient $\mathbb{C}/R$, with $R$ the product group generated by the numbers $\exp(2\pi i\lambda_j)$.
When all the ratios $\lambda_j/\lambda_i$ are rationals, it implies that the foliation has a rational first integral $F$ and all the leaves of $\mathcal{F}$ are an open subset of a complex algebraic curve. If the closure of a leaf $\mathcal{L}$ of $\mathcal{F}$ is not a complex algebraic curve, we call it \emph{non-algebraic} leaf. 

Note that for each singular point $p$ of $\mathcal{F}$ out of the polar divisor $D$, there is a simply connected open neighborhood $U$, such that $U\cap D=\emptyset$ and the restriction $\omega|_U$ is an exact 1-form. Thus, the number of separatrices through $p$ is finite. Now we call a leaf $\mathcal{L}$ of $\mathcal{F}$ \emph{generic}
if $\mathcal{L}$ is not an irreducible component of $D$ or a separatrix of any point in $\mathrm{Sing}(\mathcal{F})-D$. In general, the generic leaves are non-algebraic.
The following results describe the possible topological types of a non-algebraic leaf of logarithmic foliations on $\mathbb{CP}^2$.

\begin{lemma}\label{logarithmic ends}
Let $\mathcal{F}$ be a logarithmic foliation on $\mathbb{CP}^2$ with polar divisor $D$.
Let $\mathcal{L}$ be a non-algebraic leaf of $\mathcal F$. If $e$ is an end of $\mathcal{L}$ then
either locally the leaf $\mathcal{L}$ is a separatrix of a singularity of $\mathcal F$ on the complement of $D$, or
$\overline e \cap D \neq \emptyset$.
\end{lemma}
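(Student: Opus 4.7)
The plan is to prove the contrapositive: assume $\overline{e}\cap D=\emptyset$ and establish that $\mathcal L$ is locally a separatrix of a singularity of $\mathcal F$ lying in $M^{\ast}:=\mathbb{CP}^{2}\setminus D$. Since $\overline{e}$ is closed in the compact $\mathbb{CP}^{2}$ and avoids $D$, it is a compact subset of $\mathbb{CP}^{2}\setminus D$, where $\omega$ is closed and holomorphic, and the argument splits according to whether $\overline{e}$ meets $\mathrm{Sing}(\mathcal F)$. In the easier case $\overline{e}\cap\mathrm{Sing}(\mathcal F)\neq\emptyset$ one picks $p$ in this intersection. Since $p\notin D$, the form $\omega$ is holomorphic on a simply connected neighborhood $W$ of $p$, and closedness yields $\omega=df$ for some holomorphic $f$ on $W$, with $df(p)=\omega(p)=0$. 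Any plaque of $\mathcal L$ inside $W$ satisfies $\omega|_{\mathcal L}=0$, so $f$ is constant on that plaque; since points of $\mathcal L$ belonging to the end $e$ accumulate at $p$, that constant must equal $f(p)$. Shrinking the representative $P_{k}$ of $e$ to lie in $W$, one obtains $P_{k}\subset\{f=f(p)\}$, an analytic curve through $p$ which is a (finite union of) local separatrix(es) of the singularity, giving the desired conclusion.

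The remaining case $\overline{e}\cap\mathrm{Sing}(\mathcal F)=\emptyset$ must be ruled out using that $\mathcal L$ is non-algebraic. There $\overline{e}$ sits in the regular locus of $\mathcal F$ outside $D$, and on a neighborhood $V$ of $\overline{e}$ the form $\omega$ is closed, holomorphic, and nonvanishing, so $\mathcal F|_{V}$ carries local holomorphic first integrals. Choosing $k$ large so that $P_{k}$ and its compact $\mathcal L$-boundary lie in $V$, the aim is to show that $\overline{P_{k}}$ is a compact pure $1$-dimensional analytic subvariety of $V$. In any simply connected flow box $U\subset V$ with $\omega=df$, plaques of $\mathcal L$ sit in level sets $\{f=c_{i}\}$, and any two such values differ by the period of $\omega$ over a loop built from a path in $\mathcal L$ joining the plaques and a return path in $U$, so $\{c_{i}\}$ lies in a single coset of the period lattice $\Lambda$ of $\omega$. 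The compactness of $\overline{e}$ together with the transverse structure provided by $\omega$ forbids accumulation of these values, so $\overline{P_{k}}\cap U$ is a finite union of analytic plaques. Chow's theorem then promotes $\overline{P_{k}}$ to an algebraic curve $C\subset\mathbb{CP}^{2}$ disjoint from $D$; since $C$ shares the open subset $P_{k}$ with $\mathcal L$, analytic continuation along the foliation yields $\mathcal L\subset C$, contradicting the non-algebraicity of $\mathcal L$.

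The main obstacle is the analyticity of $\overline{P_{k}}$ in the second case, which amounts to excluding ``dense wrapping'' of the leaf inside a compact region of the regular locus away from $D$. Because the period lattice $\Lambda$ of a logarithmic form need not be discrete, a priori the transverse values of plaques could accumulate, and one must exploit both the global compactness of $\overline{e}$ inside $\mathbb{CP}^{2}\setminus D$ and the fact that the holonomy of $\mathcal F$ around components of $D$ is linearizable into $\mathbb{C}^{\ast}$ to rule this out; alternatively, one can invoke a Remmert--Stein-type extension result to obtain the required analyticity of $\overline{P_{k}}$ directly without a plaque-by-plaque analysis.
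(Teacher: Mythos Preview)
Your Case~1 (the end accumulates at a singularity $p\notin D$) is essentially fine and matches the paper's conclusion in that branch, though note that different plaques of $\mathcal L$ in $W$ may lie in different level sets of the local primitive $f$; what you really need (and what suffices for the lemma) is only that the end accumulates at $p$, so that some tail $P_k$ sits inside the germ of $\{f=f(p)\}$ once $\overline{e}\setminus e=\{p\}$.

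The genuine gap is in Case~2. You correctly isolate the obstacle---showing that $\overline{P_k}$ is analytic---but you do not close it, and the two fixes you float do not work. For a generic logarithmic form the period group $\Lambda$ generated by the $2\pi i\lambda_j$ is dense in $\mathbb C$, so in a flow box the transverse $f$-values of the plaques of $\mathcal L$ may very well accumulate; nothing in the compactness of $\overline{e}$ prevents this (a leaf can be dense in a compact $\mathcal F$-saturated set). The linearizability of the holonomy around the components of $D$ is a statement about the local dynamics \emph{near} $D$ and gives you no leverage once $\overline{e}\cap D=\emptyset$. Remmert--Stein would require knowing a priori that $\overline{P_k}\setminus P_k$ is contained in an analytic set of dimension~$0$, which is precisely what is in question. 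So the argument, as written, does not establish that $\overline{P_k}$ is a variety, and the appeal to Chow is unjustified.

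The paper avoids this trap by not trying to make the leaf closure analytic. Instead it splits $D=D_0\cup D_\infty$ and uses the single-valued \emph{rational} map $F=F_0/F_\infty:\mathbb{CP}^2\setminus D\to\mathbb C^\ast$, which is \emph{not} a first integral of $\mathcal F$. If $\overline{e}$ misses $D$, then $f=F|_e$ has image with $0,\infty\notin\overline{f(e)}$; the saturated boundary $\partial e$ is $\mathcal F$-invariant and is mapped by $F$ into $\partial f(e)$, so on each leaf in $\partial e$ the holomorphic function $F$ is constant. Thus either $\partial e$ is a single point (a singularity in $\mathbb{CP}^2\setminus D$, giving the separatrix alternative), or $\partial e$ contains infinitely many leaves each lying in a fiber of $F$, hence infinitely many algebraic $\mathcal F$-invariant curves, and Jouanolou's theorem forces $\mathcal F$ to have a rational first integral, contradicting the non-algebraicity of $\mathcal L$. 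This is the missing idea in your Case~2: replace the multivalued primitive of $\omega$ by an auxiliary rational function and invoke Jouanolou, rather than attempting a direct analyticity-of-the-closure argument that the dense period group obstructs.
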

\begin{proof}
The divisor $D$ has at least two irreducible components.
Divide the set of irreducible components in two
sets, say $D_0$ and $D_{\infty}$. Let $F_0$ and $F_{\infty}$
be homogeneous polynomials of the same degree on $\mathbb C^3$ defining, respectively,
 $D_0$ and $D_{\infty}$. The quotient $\frac{F_0}{F_{\infty}}$
 defines a non-constant holomorphic map $ F : U \to \mathbb C^\ast$, where
 $U$ is the complement of $D$ in $\mathbb{CP}^2$.

From now on we assume that an end $e\in\mathcal{E}(\mathcal{L})$ is an open set of the nested sequence $\{P_k\}$ for a sufficiently large $k$. 
Let $K$ be a compact subset of $\mathcal{L}$. Let $e$ be an end of $\mathcal{L}$ contained in
a connected component of $\mathcal{L} - K$ such that the boundary $\partial_{\mathcal{L}}e$ in $\mathcal{L}$ is compact. The restriction of $F$ to $e$ is a holomorphic function $f: e \to \mathbb C$. If $f$ is constant, then $\mathcal{L}$ is an irreducible component of a fibre of the rational function and, hence, is algebraic which contradicts our assumptions.
So $f: e \to \mathbb C$ is a non-constant holomorphic function.

Let $V = f(e) \subset \mathbb C$ be the image of $f$. Since $f$ is holomorphic and non-constant $V$ is an open subset of $\mathbb C$. If it contains $\infty$ or $0$ in its closure, then, the lemma follows by continuity, the closure of the end $e$ intersects $D_{\infty}$ or $D_0$ respectively.

Assume from now on that $\infty,0 \notin \overline{f(e)}$.
Let $\mathcal G$ be the restriction of $\mathcal F$ to $U-K=U'$. The boundary $\partial e = \overline{e} - e$  in $U'$ is mapped by $f$ to $ \partial V$, the boundary of $V$. We point out that  $\partial e$ is invariant by $\mathcal G$, see \cite[Proposition 4.1.11]{candel2003foliations}. If $\partial e$ reduces to a point, the end in question
accumulates at one of the finitely many singularities of $\mathcal F$ in $U$. If instead the
boundary contains infinitely many points; then, it follows that $F(\partial e)$ contains infinitely many points. Therefore, $\mathcal G$ contains infinitely many leaves
contained in fibers of $F$. Thus $\mathcal F$ has infinite algebraic leaves. Jouanolou's Theorem implies that every leaf of $\mathcal{F}$ is algebraic, contradicting our assumptions again.

\end{proof}

We have so far describe the topological invariants of leaves of logarithmic foliations on $\mathbb{CP}^2$. We can prove our main result.

\begin{proof}[Proof of Theorem \ref{THM:LNMP2}]
Our proof starts with the observation that the polar divisor $D=\sum_{j=1}^r D_j$ has at least three irreducible components. Otherwise, the equation $\sum_{j=1}^2 d_j\lambda_j=0$ implies that $\lambda_1/\lambda_2$ is negative.
The assumption $\lambda_j/\lambda_l\notin\mathbb{R}_{<0}$ and the equations \[\sum_{j=1}d_j\frac{\lambda_j}{\lambda_l}=0\]
 implies that at least two ratios $\lambda_j/\lambda_l$ are complex numbers.
 
Our next claim is that the space of ends $\mathcal{E}(\mathcal{L})$ of the generic leaf $\mathcal{L}$ is a single point. 
Suppose the proposition is false. Then, we could find a generic leaf $\mathcal{L}$ and a compact subset $K$ of $\mathcal{L}$ such that the complement of $K$ in $\mathcal{L}$ has two connected components $e_1$ and $e_2$.
There is a neighbourhood $V$ of $D$ whose intersections with the level sets of $F:\mathbb{CP}^2\rightarrow\mathbb{C}/R$  are connected, \cite[Theorem B]{paul1997connectedness}. We can choose $V$ such that the intersection $V\cap K$ is empty. Since $\mathcal{L}$ is a generic leaf, Lemma \ref{logarithmic ends} shows that the intersections $E_j\cap V$, $j=1,2$, are not empty. Thus, $e_1$ and $e_2$ intersect in the connected set $V\cap\mathcal{L}$, a contradiction.
Therefore, the generic leaf $\mathcal{L}$ has a single end $e$. In particular, \cite[Proposition 2.3]{paul1997connectedness} implies that the closure of $e$ contains $D$. 
When $r>3$ or some $d_j>1$,  the fact that at least two ratios $\lambda_j/\lambda_l$ are complex numbers, the polar divisor $D$ is normal crossing and Lemma \ref{lemma:genuslog} imply that the generic leaf has infinitely many handles attached. Thus, the generic leaf $\mathcal{L}$ is homeomorphic to the Loch Ness Monster.   

Let $D$ be three lines in general position. Without loss of generality we assume that the  homogeneous equation in $\mathbb{C}^3$ are $l_1=\{x=0\},l_2=\{y=0\},l_3=\{z=0\}$ and $\lambda_1/\lambda_2\in\mathbb{C}$. Note that $\mathbb{CP}^2-D$ is biholomorphic to $(\mathbb{C}^*)^2$, thus we can define the universal covering 
\[\begin{array}{lll}
\rho:\mathbb{C}^{2} & \rightarrow  & \quad\quad\mathbb{CP}^{2}-D \\
 (x,y) & \mapsto &  (e^{2\pi ix}, e^{2\pi iy}).
\end{array}
\]
The pull-back $\rho^*\omega$ admits the following expression
 \[
 2\pi i(\lambda_1dx+\lambda_2dy)
 \]
which is a linear 1-form on $\mathbb{C}^{2}$. Thus, the leaves $\mathcal{L}^*$ of $\rho^*\mathcal{F}$ are complex lines. Since $\lambda_1/\lambda_2\in\mathbb{C}$, the restriction $\rho|_{\mathcal{L}^*}$ is a biholomorphis on its image, which are the generic leaves of $\mathcal{F}$, and the proof is complete.
\end{proof}
\begin{example}
Here we generalize an example given by Cerveau in \cite[Subsection 2.12]{cerveau2013quelques}. The homogeneous closed logarithmic 1-form 
\[\omega=\prod_{j=1}^r x_j\sum_{j=1}^r\lambda_j\frac{dx_j}{x_j},\quad\sum_{j=1}^r\lambda_j=0\] 
in $\mathbb{C}^{n+1}$ defines a logarithmic foliation $\mathcal{F}$ on $\mathbb{CP}^n$. Assume $n+1\geq r>3$ and the ratios of residues $\lambda_j$ are not negative real numbers. An analysis similar to that in the proof of Theorem \ref{THM:LNMP2} shows that the leaves different from the irreducible components of the polar divisor $\sum_j H_j$ of $\omega$ are biholomorphic to $\mathbb{C}^{n-1}$, with $H_j=\{x_j=0\}$. Consider a plane projective $P\subset\mathbb{CP}^n$ through the point $o=[1:0:\cdots:0]$, whose intersections with the hyperplanes out of $o$ are general. Thus the restriction $\mathcal{F}'=\mathcal{F}|_{P}$ is a homogeneous foliation, and by the assumptions on the ratios the generic leaf of $\mathcal{F}'$ is homeomorphic to the Loch Ness Monster. Moreover, if $P$ is general respect all the hyperplanes $H_j$ then Theorem \ref{THM:LNMP2} shows that the generic leaf of $\mathcal{F}'$ is also homeomorphic to the Loch Ness Monster.
\end{example}
\begin{remark}
The Example \ref{exp:logFol1} gives a logarithmic foliation with generic leaf homeomorphic to a plane without an infinite discrete set of points. In this case, any element of the space of ends of a generic leaf accumulates in the polar divisor. Thus, it is not enough that a holomorphic foliation has an invariant compact Riemann surface to ensure that the foliation has leaves with a non-trivial genus.
\end{remark}
\begin{remark}
An essential fact in the E. Ghys theorem \cite{ghys1995topologie} is that he can decide the space of ends for the generic leaf, this still unknown for almost every class of holomorphic foliations on compact complex surfaces.
\end{remark}

\bibliographystyle{alpha}

\end{document}